\DeclareFontFamily{OT2}{cmr}{\hyphenchar\font45 }
\DeclareFontShape{OT2}{cmr}{m}{n}{<->wncyr10}{}
\DeclareFontShape{OT2}{cmr}{m}{it}{<->wncyi10}{}
\DeclareFontShape{OT2}{cmr}{m}{sc}{<->wncysc10}{}
\DeclareFontShape{OT2}{cmr}{b}{n}{<->wncyb10}{}
\DeclareFontShape{OT2}{cmr}{bx}{n}{<->ssub*wncyr/b/n}{}
\DeclareFontFamily{OT2}{cmss}{\hyphenchar\font45 }
\DeclareFontShape{OT2}{cmss}{m}{n}{<->wncyss10}{}
\DeclareRobustCommand\cyr{\fontencoding{OT2}\selectfont}
\DeclareTextFontCommand{\textcyr}{\cyr}
\newtheorem{theorem}{Theorem}[section]
\newtheorem{corr}[theorem]{Corollary}
\theoremstyle{definition}
\newtheorem{deff}[theorem]{Definition}
\theoremstyle{remark}
\newtheorem{comm}[theorem]{Remark}
\newcommand{\R}{\mathbb R}
\newcommand{\C}{\mathbb C}
\newcommand{\p}{\partial}
\newcommand{\z}{\bar z}
\DeclareMathOperator{\im}{Im}
\DeclareMathOperator{\re}{Re}
\newcommand{\Hpn}{H^{n,p}(D)}
\newcommand{\Mpn}{H^{n,p}_A(D)}
\title{Hardy Spaces of Meta-Analytic Functions and the Schwarz Boundary Value Problem}
\author{William L. Blair}
\address{Department of Mathematical Sciences\\
  University of Arkansas\\
  Fayetteville, Arkansas}
\email{wlblair@uark.edu}
\keywords{Schwarz boundary value problem, boundary values in the sense of distributions, nonhomogeneous Cauchy-Riemann equations, meta-analytic functions, Hardy spaces}
\subjclass[2010]{30E25, 30G20, 30H10, 35G15, 46F20}
\begin{document}

\begin{abstract}
    We extend representation formulas that generalize the similarity principle of solutions to the Vekua equation to certain classes of meta-analytic functions. Also, we solve a generalization of the higher-order Schwarz boundary value problem in the context of meta-analytic functions with boundary conditions that are boundary values in the sense of distributions. 
\end{abstract}

\maketitle

\section{Introduction}
    In this paper, we prove a representation formula for certain classes of meta-analytic functions and solve an associated Schwarz boundary value problem.

    We work to further the study of solutions to generalizations of the Cauchy-Riemann equation. One of the most well studied generalization is the Vekua equation
    \begin{equation}
        \frac{\p w}{\p\z} = Aw + B\overline{w} \label{vekeqn},
    \end{equation}
    for $A,B \in L^q$, $q>2$, see \cite{Vek}. Solutions of (\ref{vekeqn}) are called generalized analytic functions and share many of the desirable characteristics of complex analytic functions because of the representation known as the similarity principle. The similarity principle is the representation of a generalized analytic function as a factorization 
    \[
        w = e^\varphi \phi,
    \]
    where $\phi$ is holomorphic and $\varphi$ is H\"older continuous on the closure of the domain. Since H\"older continuous functions on a closed set are bounded in modulus, the similarity principle not only extends properties of generic holomorphic functions that depend on size but extends those kinds of properties of the Hardy spaces of holomorphic functions when the holomorphic factor is an element of one of these spaces, see \cite{KlimBook}. The poly-analytic (or $n$-analytic) functions are those functions that solve the higher-order generalization of the Cauchy-Riemann equation
    \begin{equation*}
        \frac{\p^n f}{\p\z^n} = 0.
    \end{equation*}
    These functions are known to be representable as a polynomial in $\z$ with holomorphic coefficients. When the holomorphic coefficients are Hardy space functions, the resulting classes of functions inherit some of the properties of the corresponding Hardy space, see \cite{polyhardy}. Considering the Vekua equation as 
    \begin{equation*}
        \left(\frac{\p }{\p\z} -A -BC(\cdot)\right) w = 0,
    \end{equation*}
    where $C(\cdot)$ denotes the mapping that sends functions to their complex conjugate, it is natural to consider the higher-order generalizations
    \begin{equation}
        \left(\frac{\p }{\p\z} -A -BC(\cdot)\right)^n w = 0, \label{highervek}
    \end{equation}
    for $n >1$. In \cite{metahardy}, the authors show that solutions to (\ref{highervek}) with $A \in \C$ and $B\equiv 0$ are representable as 
    \[
        w = \sum_{k=0}^{n-1} \z^k e^\varphi \phi_k,
    \]
    i.e., a polynomial in $\z$ with coefficients which are solutions to the Vekua equation (\ref{vekeqn}). In \cite{metahardy}, the authors call these functions meta-analytic and show that when the generalized analytic function coefficients are members of the generalized Hardy spaces from \cite{KlimBook}, then these classes of functions inherit properties of the Hardy spaces. We extend this representation to the solutions of (\ref{highervek}) with $A$ a member of the $W^{n-1,q}$ Sobolev space, $q>2$, and $B\equiv 0$, and we show that, with $A \in W^{n-1,\infty}$, the extension of Hardy space boundary behavior is present in this case too. 

    Next, we consider the Schwarz boundary value problem. The Schwarz boundary value problem is a classically studied simplification of the Riemann-Hilbert problem in the complex plane, see \cite{BegBook}. The author, in \cite{WB} and \cite{WB2}, extended the solvable classes of the Schwarz boundary value problem to those with boundary conditions in terms of boundary values in the sense of distributions for solutions of nonhomogeneous Cauchy-Riemann equations
    \begin{equation}
        \frac{\p w}{\p\z} = f, \label{nonhcr}
    \end{equation}
    and the higher order generalizations 
    \begin{equation}
        \frac{\p^n w}{\p\z^n} = f, \label{nonhcrhigher}
    \end{equation}
    with $f$ an integrable function. In \cite{WB2}, a special case was considered that employed the structure of $n$-analytic functions to solve a  Schwarz boundary value problem where the $f$ in (\ref{nonhcr}) and (\ref{nonhcrhigher}) is not necessarily integrable, the first result of its kind in the literature. We utilize the construction from the results in \cite{WB2} and the representations we prove for meta-analytic functions to solve a Schwarz boundary value problem where the solution is meta-analytic and all of the boundary conditions are in terms of boundary values in the sense of distributions. The standard technique of solving the Schwarz boundary value problem is to use singular integral operators, see \cite{Vek},\cite{BegBook}, or \cite{ Beg}. The novelty of our technique is to avoid the use of these integral operators in the places where their use would require greater boundary regularity. We are not restricted to continuous boundaries and can consider the more general case where the boundary condition is in terms of only boundary values in the sense of distributions.This work extends the results of \cite{metahardy}, the classical case of continuous boundary condition, and the case of $A\equiv 0$ from \cite{WB2}.

    The paper is structured as follows. Section 2 provides definitions of the classes of functions that we will encounter throughout the paper and background results. In Section 3, we prove the generalization of the similarity principle for solutions of $\left( \frac{\p}{\p\z} - A\right)^n f = 0$ with $A$ nonconstant. Also, we show the improvement that occurs to the representation when we consider the solution in the context of Hardy spaces and that certain desirable boundary behaviors are recovered. In Section 4, we consider a generalization of the Schwarz boundary value problem for meta-analytic functions and with boundary conditions in terms of only boundary values in the sense of distributions of holomorphic functions and solve the problem explicitly. Certain connections between the solutions of the boundary value problem and the Hardy spaces of meta-analytic functions are described. 

    We thank Professor Andrew Raich and Professor Gustavo Hoepfner for their support during the time this work was produced. 

\section{Definitions and background}

We represent the unit disk in the complex plane by $D$, and its boundary by $\p D$. We represent the  Sobolev spaces of $L^q(D)$ functions with $k$ weak derivatives which are all in $L^q(D)$ by $W^{k,q}(D)$. We represent the space of distributions on $\p D$ by $\mathcal{D}'(\p D)$. By $C^{0,\alpha}(S)$, we denote the set of $\alpha$-H\"older continuous functions defined on the set $S$. We define the classes of functions that are used throughout.

\begin{deff}
We denote by $H(D)$ the set of holomorphic functions on $D$, i.e., $f: D \to \C$ such that
\[
\frac{\p f}{\p\z} = 0.
\]
\end{deff}

\begin{deff}\label{bvcircle}Let $f$ be a function defined on $D$. We say that $f$ has a boundary value in the sense of distributions, denoted by $f_b \in \mathcal{D}'(\p D)$, if, for every $ \varphi \in C^\infty(\partial D)$, the limit
            \[
             \langle f_b, \varphi \rangle := \lim_{r \nearrow 1} \int_0^{2\pi} f(re^{i\theta}) \, \varphi(\theta) \,d\theta
            \]
            exists.
            
\end{deff}

\begin{deff}
We define $H_b$ to be that subset of functions in $H(D)$ that have boundary values in the sense of distributions.
\end{deff}

The next theorem gives a growth condition which guarantees a holomorphic function has a boundary value in the sense of distributions and provides a representation formula which we use in Section \ref{sbvp}. 

\begin{theorem}[Theorem 3.1 \cite{GHJH2}]\label{GHJH23point1}
For $f \in H(D)$, the following are equivalent:
\begin{enumerate}
    \item For every $\phi \in C^\infty(\p D)$, there exists the limit
    \[
             \langle f_b, \phi \rangle := \lim_{r \nearrow 1} \int_0^{2\pi} f(re^{i\theta}) \, \phi(\theta) \,d\theta.
            \]

    \item There is a distribution $f_b \in \mathcal{D}'(\p D)$ such that $f$ is the Poisson integral of $f_b$
    \[
             f(re^{i\theta}) = \frac{1}{2\pi}\langle f_b, P_r(\theta - \cdot) \rangle,
            \]
    where 
    \[
        P_r(\theta) = \frac{1-r^2}{1-2r\cos(\theta) +r^2}
    \]
    is the Poisson kernel on $D$. 

    \item There are constants $C>0$, $\alpha \geq 0$, such that 
    \[
        |f(re^{i\theta})| \leq \frac{C}{(1-r)^\alpha},
    \]
    for $0 \leq r < 1$.
\end{enumerate}
\end{theorem}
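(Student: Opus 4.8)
The plan is to prove the three conditions equivalent via the cycle $(1) \Rightarrow (3) \Rightarrow (2) \Rightarrow (1)$, using the Taylor coefficients of $f$ as the common bridge. Writing $f(z) = \sum_{n \geq 0} a_n z^n$, the governing principle is that condition (3) corresponds to at most polynomial growth of the coefficients $(a_n)$, and that a sequence of polynomial growth is precisely the Fourier coefficient sequence of a distribution on $\p D$. The candidate boundary distribution is in every case $f_b = \sum_{n \geq 0} a_n e^{in\theta}$, interpreted as an element of $\dc$.

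I expect $(1) \Rightarrow (3)$ to be the main obstacle, and I would handle it with the uniform boundedness principle. For each $r \in [0,1)$, the functional $T_r(\phi) = \int_0^{2\pi} f(re^{i\theta})\phi(\theta)\,d\theta$ is continuous on the Fr\'echet space $C^\infty(\p D)$ because $f(re^{i\cdot})$ is smooth. Hypothesis (1) asserts that $\{T_r(\phi)\}_{r}$ converges, hence is bounded, for each fixed $\phi$; since $C^\infty(\p D)$ is Fr\'echet, Banach--Steinhaus upgrades this to equicontinuity, producing $M > 0$ and an integer $k$ with $|T_r(\phi)| \leq M \sum_{j=0}^k \|\phi^{(j)}\|_\infty$ uniformly in $r$ and $\phi$. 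Testing against $\phi = e^{-in\theta}$ and using orthogonality, which gives $T_r(e^{-in\cdot}) = 2\pi a_n r^n$ for $n \geq 0$, yields $|a_n|\, r^n \lesssim (1+|n|)^k$; letting $r \nearrow 1$ gives the polynomial bound $|a_n| \lesssim (1+n)^k$. Summing the Taylor series and using $\sum_{n\geq 0}(n+1)^k r^n \lesssim (1-r)^{-(k+1)}$ then produces the growth estimate (3) with $\alpha = k+1$.

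For $(3) \Rightarrow (2)$, Cauchy's estimate $|a_n| \leq r^{-n}\max_{|z|=r}|f(z)| \leq C\,r^{-n}(1-r)^{-\alpha}$, optimized at $r = n/(n+\alpha)$, gives $|a_n| \lesssim (1+n)^\alpha$. Polynomial growth of $(a_n)$ guarantees that $\sum_{n\geq 0} a_n e^{in\theta}$ converges in $\dc$ to the distribution $f_b$. Since the Poisson kernel expands as $P_r(\theta) = \sum_{m\in\Z} r^{|m|}e^{im\theta}$, the pairing $\langle f_b, P_r(\theta - \cdot)\rangle$ collapses by orthogonality to $2\pi\sum_{n\geq 0} a_n r^n e^{in\theta} = 2\pi f(re^{i\theta})$, which is exactly the Poisson representation in (2).

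Finally, for $(2) \Rightarrow (1)$, I would use that the Poisson kernel is an approximate identity in the smooth category: for $\phi \in C^\infty(\p D)$ the smoothed function $\Phi_r(\psi) = \frac{1}{2\pi}\int_0^{2\pi} P_r(\theta - \psi)\phi(\theta)\,d\theta$ converges to $\phi$ in the topology of $C^\infty(\p D)$ as $r \nearrow 1$, as one sees from $\widehat{\Phi_r}(m) = r^{|m|}\widehat{\phi}(m)$ together with the rapid decay of $\widehat{\phi}$. Interchanging the distributional pairing with the $\theta$-integral, which is justified because $\theta \mapsto P_r(\theta - \cdot)$ is continuous into $C^\infty(\p D)$ and $f_b$ has finite order, rewrites $T_r(\phi) = \langle f_b, \Phi_r\rangle$. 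The continuity of $f_b$ then gives $\lim_{r\nearrow 1} T_r(\phi) = \langle f_b, \phi\rangle$, so the limit in (1) exists. This closes the cycle and simultaneously identifies the boundary distribution across the three conditions.
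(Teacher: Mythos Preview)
Your argument is correct and standard: the cycle $(1)\Rightarrow(3)\Rightarrow(2)\Rightarrow(1)$ works exactly as you describe, with Banach--Steinhaus on the Fr\'echet space $C^\infty(\p D)$ providing the key uniform bound in the first implication, Cauchy estimates giving polynomial growth of the Taylor coefficients in the second, and the approximate-identity property of the Poisson kernel in $C^\infty(\p D)$ closing the loop.

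There is, however, nothing to compare against here. The paper does not supply a proof of this statement: it is quoted in Section~2 as background material, attributed to Theorem~3.1 of \cite{GHJH2}, and used later without argument. So while your proposal is a valid proof, it is not a reconstruction of anything the paper actually does; the paper simply cites the result.
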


\begin{deff}
For $0 < p < \infty$, we denote by $H^p(D)$ the holomorphic Hardy spaces of functions $f \in H(D)$ such that 
\[
||w||_{H^p(D)}:= \left(\sup_{0< r < 1} \int_0^{2\pi} |w(re^{i\theta})|^p \,d\theta\right)^{1/p} < \infty.
\]
\end{deff}

\begin{theorem}[Corollary 3.1 \cite{GHJH2}]
The functions in $H^p(D)$, $0 < p \leq \infty$, satisfy (3) in Theorem \ref{GHJH23point1}
\end{theorem}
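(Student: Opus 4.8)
The plan is to extract the pointwise growth bound in condition (3) directly from the defining supremum of the $H^p(D)$ norm, using the subharmonicity of $|f|^p$ rather than any boundary-value machinery, so that a single argument covers the whole range $0 < p < \infty$; the case $p = \infty$ is disposed of first and trivially, since an $H^\infty(D)$ function is bounded by $\|f\|_{H^\infty(D)}$ and hence satisfies (3) with $\alpha = 0$ and $C = \|f\|_{H^\infty(D)}$.

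For $0 < p < \infty$, the starting observation is that $|f|^p$ is subharmonic on $D$ for every holomorphic $f$ and every $p > 0$. I would then fix an arbitrary point $z_0 = re^{i\theta}$ with $0 \le r < 1$ and set $\rho = (1-r)/2$, so that the closed ball $\overline{B(z_0,\rho)}$ lies inside the disk $\{|w| \le (1+r)/2\} \subset D$. The sub-mean value inequality over this ball gives
\[
|f(z_0)|^p \le \frac{1}{\pi\rho^2}\int_{B(z_0,\rho)} |f(w)|^p\, dA(w).
\]
Since every $w \in B(z_0,\rho)$ satisfies $|w| \le r + \rho = (1+r)/2 < 1$, I would enlarge the domain of integration to the full disk of radius $(1+r)/2$, pass to polar coordinates, and bound the inner circular mean by the norm, obtaining
\[
\int_{B(z_0,\rho)} |f(w)|^p\, dA(w) \le \int_0^{(1+r)/2}\left(\int_0^{2\pi} |f(se^{i\phi})|^p\, d\phi\right) s\, ds \le \|f\|_{H^p(D)}^p \int_0^{(1+r)/2} s\, ds \le \frac{\|f\|_{H^p(D)}^p}{2}.
\]
Combining the two displays and taking $p$-th roots yields
\[
|f(z_0)| \le \left(\frac{1}{2\pi\rho^2}\right)^{1/p}\|f\|_{H^p(D)} = \left(\frac{2}{\pi}\right)^{1/p}\frac{\|f\|_{H^p(D)}}{(1-r)^{2/p}},
\]
which is exactly condition (3) with $C = (2/\pi)^{1/p}\,\|f\|_{H^p(D)}$ and $\alpha = 2/p$.

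The only genuinely delicate point I expect is the subharmonicity of $|f|^p$ for all $p > 0$, including at the zeros of $f$: away from the zero set one writes $|f|^p = \exp(p\log|f|)$ as a convex increasing function of the harmonic function $\log|f|$, and subharmonicity persists across the isolated zeros because $|f|^p$ is continuous and already satisfies the sub-mean value inequality off a discrete set. Everything else is a routine comparison of an area integral against the defining supremum of the norm, and the exponent $\alpha = 2/p$ emerges precisely from the scaling $\rho \sim (1-r)$ of the ball radius against the unit circle. (For $p \ge 1$ one could alternatively invoke the Poisson-integral representation from part (2) of Theorem \ref{GHJH23point1}, but the subharmonicity route has the advantage of treating all $p$ uniformly.)
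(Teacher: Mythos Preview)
The paper does not supply its own proof of this statement: it is quoted verbatim as Corollary~3.1 of \cite{GHJH2} in the background section, so there is nothing to compare your argument against. Your proof is correct and self-contained. The subharmonicity of $|f|^p$ for all $p>0$ together with the area sub-mean-value inequality over a ball of radius $(1-r)/2$ gives precisely the growth bound $|f(z)| \le C(1-|z|)^{-2/p}$, and the $p=\infty$ case is indeed trivial. The exponent $\alpha = 2/p$ you obtain is not sharp (the optimal exponent is $1/p$, via a slightly more refined argument), but condition~(3) only demands the existence of \emph{some} $\alpha \ge 0$, so this is immaterial. Your remark on why subharmonicity of $|f|^p$ persists across the zeros of $f$ is also accurate.
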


So the functions in $H^p(D)$ have boundary values in the sense of distributions, but it is pointed out in \cite{GHJH2} that $\cup_{0< p \leq \infty} H^p(D)$ is a proper subset of $H_b$ (see \cite{Duren} for an example of $h \in H_b$ and $h\not\in \cup_{0< p \leq \infty} H^p(D)$).

A classical result about the boundary behavior of functions in $H^p(D)$ is the following.

\begin{theorem}[\cite{Duren}]\label{bvcon}
A function $w(z) \in H^p(D)$, $0 < p < \infty$, has nontangential boundary values $w_+(e^{i\theta}) \in L^p(\partial D)$ at almost all points $e^{i\theta}$ of the circle $\partial D$, 
\[
\lim_{r\nearrow 1} \int_0^{2\pi} |w(re^{i\theta})|^p \, d\theta = \int_0^{2\pi} |w_+(e^{i\theta})|^p \,d\theta,
\]
and
\[
\lim_{r\nearrow 1} \int_0^{2\pi} |w(re^{i\theta})- w_+(e^{i\theta})|^p \, d\theta = 0.
\]
\end{theorem}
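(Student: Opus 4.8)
The statement is a classical theorem of Duren, so I would follow the standard route that reduces the general $H^p(D)$ problem to the Hilbert space case $H^2(D)$, where Parseval's identity makes the boundary analysis transparent. There are three things to establish: the almost-everywhere existence of the nontangential limit $w_+$, the convergence of the $p$-th integral means to $\int_0^{2\pi}|w_+|^p\,d\theta$, and the $L^p$ convergence of $w(re^{i\theta})$ to $w_+$. My plan is to first dispose of the zeros of $w$ by Blaschke factorization, then pass to a power that lands in $H^2(D)$, and finally transfer the conclusions back to $w$.

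First I would factor out the zeros. The nondecreasing, bounded integral means $m_p(r)=\int_0^{2\pi}|w(re^{i\theta})|^p\,d\theta$ (nondecreasing because $|w|^p$ is subharmonic for every $p>0$ when $w$ is holomorphic) force the zeros $\{a_n\}$ of $w$ to satisfy the Blaschke condition $\sum_n(1-|a_n|)<\infty$. Writing $B$ for the associated Blaschke product and $g=w/B$, I would use that division by partial Blaschke products preserves the $H^p$ norm to conclude that $g\in H^p(D)$ is zero-free with $\|g\|_{H^p(D)}=\|w\|_{H^p(D)}$. Since $g$ has no zeros, a holomorphic branch of $h:=g^{p/2}$ exists and lies in $H^2(D)$, with $\|h\|_{H^2(D)}^2=\|g\|_{H^p(D)}^p$.

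In $H^2(D)$ everything is explicit. If $h(re^{i\theta})=\sum_{n\ge0}c_n r^n e^{in\theta}$, then $\sum_n|c_n|^2<\infty$, the boundary function $h_+=\sum_{n\ge0}c_n e^{in\theta}\in L^2(\p D)$ exists by Parseval, and $\int_0^{2\pi}|h(re^{i\theta})-h_+|^2\,d\theta=2\pi\sum_{n\ge0}|c_n|^2(1-r^n)^2\to0$, giving $L^2$ convergence directly; in particular $\int_0^{2\pi}|h_+|^2\,d\theta=\|h\|_{H^2(D)}^2$. Realizing $h$ as the Poisson integral of $h_+$ and invoking Fatou's theorem on nontangential limits of Poisson integrals yields $h(re^{i\theta})\to h_+(e^{i\theta})$ nontangentially a.e. Because $\log|h_+|\in L^1(\p D)$, the limit $h_+$ is nonzero a.e., so $g=h^{2/p}$ inherits a.e.\ nontangential limits; combined with the a.e.\ unimodular limits of the bounded function $B$, this gives the a.e.\ existence of $w_+=B_+\,g_+$.

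It remains to assemble the two limit identities. Monotonicity of $m_p$ gives $\lim_{r\nearrow1}m_p(r)=\sup_r m_p(r)=\|w\|_{H^p(D)}^p$, while Fatou's lemma gives $\int_0^{2\pi}|w_+|^p\,d\theta\le\|w\|_{H^p(D)}^p$. For the reverse inequality I would use $|B_+|=1$ a.e., hence $|w_+|=|g_+|$ a.e., together with $\int_0^{2\pi}|g_+|^p\,d\theta=\int_0^{2\pi}|h_+|^2\,d\theta=\|h\|_{H^2(D)}^2=\|w\|_{H^p(D)}^p$; this forces equality, which is the first displayed identity. The second then follows from the Riesz--Scheff\'e lemma: a.e.\ convergence $w(re^{i\theta})\to w_+$ together with $\int_0^{2\pi}|w(re^{i\theta})|^p\,d\theta\to\int_0^{2\pi}|w_+|^p\,d\theta<\infty$ implies $\int_0^{2\pi}|w(re^{i\theta})-w_+|^p\,d\theta\to0$. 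The main obstacle is the zero-handling and the passage to $H^2(D)$: one must justify the Blaschke condition, the norm invariance under division by $B$, and---crucially---that $h_+\ne0$ a.e., so that the root map $h\mapsto h^{2/p}$ transports the boundary behavior back to $g$ and hence to $w$.
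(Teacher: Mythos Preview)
The paper does not prove this statement at all; it is quoted as a classical background result from Duren's monograph and is used only as a black box (e.g., in the proof of Theorem~\ref{metabv}). There is therefore no ``paper's own proof'' to compare your attempt against.

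That said, your sketch is a faithful outline of the standard argument one finds in Duren: Blaschke factorization to remove zeros, passage to $H^2(D)$ via the zero-free root $h=g^{p/2}$, explicit Parseval/Fatou analysis in $H^2(D)$, and then transfer back. One small caveat worth flagging: the final step you label ``Riesz--Scheff\'e'' (a.e.\ convergence plus convergence of $p$-th norms implies $L^p$ convergence) is usually stated for $p\ge 1$; for $0<p<1$ the conclusion is still true but needs the elementary subadditivity inequality $|a-b|^p\le |a|^p+|b|^p$ together with Fatou's lemma applied to $|w_r|^p+|w_+|^p-|w_r-w_+|^p\ge 0$. With that adjustment the argument covers the full range $0<p<\infty$ asserted in the theorem.
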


\begin{deff}
For $n$ a positive integer and $0 < p < \infty$, we define the poly-Hardy space $\Hpn$ to be the set of functions $f$ that satisfy
\[
\frac{\p^n f}{\p\z^n} = 0.
\]
and 
\[
||f||_{n,p} := \sum_{k=0}^{n-1} \left( \sup_{0 < r < 1} \int_0^{2\pi} \left| \frac{\p^k f}{\p\z^k}(re^{i\theta}) \right|^p d\theta \right)^{1/p} < \infty.
\]
\end{deff}

\begin{deff}
For $n$ a positive integer, $A \in W^{n-1,q}(D)$, $q>2$, and $0 < p < \infty$, we define the meta-Hardy space $\Mpn$ to be the set of functions $f: D \to \mathbb{C}$ that satisfy
\[
\left(\frac{\p }{\p\z}- A\right)^n  f= 0,
\]
and
\[
||f||_{n,p} := \sum_{k=0}^{n-1} \left( \sup_{0 < r < 1} \int_0^{2\pi} \left| \frac{\p^k f}{\p\z^k}(re^{i\theta}) \right|^p d\theta \right)^{1/p} < \infty.
\]
\end{deff}

Clearly, $H^{n,p}_0(D) = H^{n,p}(D)$.

We recall a well-known fact about nonhomogeneous Cauchy-Riemann equations.

\begin{theorem}[\cite{Vek},\cite{BegBook}]\label{toperator}
For any $f \in L^1(D)$, 
\[
g(z) = -\frac{1}{\pi} \iint_{D} \frac{f(\zeta)}{\zeta - z}\,d\xi\,d\eta,
\]
where $\zeta = \xi + i \eta$, solves
\[
\frac{\p g}{\p\z} = f, 
\]
and for $f \in L^q(D)$, $q>2$, $g \in C^{0,\alpha}(\overline{D})$, $\alpha = \frac{q-2}{q}$.
\end{theorem}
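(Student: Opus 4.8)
The plan is to treat the two assertions separately, both resting on the single fact that $\frac{1}{\pi z}$ is the fundamental solution of the operator $\frac{\p}{\p\z}$. Writing $E(z) = \frac{1}{\pi z}$ and extending $f$ by zero outside $D$, one recognizes $g$ as the convolution $g = E * f$, since $-\frac{1}{\pi(\zeta - z)} = E(z - \zeta)$. Because $E \in L^1_{\loc}(\C)$ and $f \in L^1(D)$ has compact support, this convolution is well defined, and the computation of $\frac{\p g}{\p\z}$ reduces, in the distributional sense natural for $L^1$ data, to computing $\frac{\p E}{\p\z}$.

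First I would establish the identity $\frac{\p}{\p\z}\frac{1}{\pi z} = \delta_0$ in $\mathcal{D}'(\C)$. Testing against $\phi \in C_c^\infty(\C)$ gives $-\frac{1}{\pi}\iint_{\C}\frac{1}{z}\frac{\p\phi}{\p\z}\,dA$; since $\frac{1}{z}$ is locally integrable, this equals the limit as $\epsilon \to 0$ of the same integral over $\{|z| > \epsilon\}$. On that region $\frac{1}{z}$ is holomorphic, so $\frac{1}{z}\frac{\p\phi}{\p\z} = \frac{\p}{\p\z}\left(\frac{\phi}{z}\right)$, and the complex Green's formula $\iint_\Omega \frac{\p u}{\p\z}\,dA = \frac{1}{2i}\oint_{\p\Omega} u\,dz$ turns the integral into a contour integral over $|z| = \epsilon$. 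Parametrizing $z = \epsilon e^{i\theta}$ collapses this to $-\pi\phi(0)$ in the limit, yielding $\phi(0) = \langle \delta_0, \phi\rangle$. Then $\frac{\p g}{\p\z} = \left(\frac{\p E}{\p\z}\right) * f = \delta_0 * f = f$, which is the first claim.

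For the regularity claim with $f \in L^q(D)$, $q > 2$, set $q' = q/(q-1)$, the conjugate exponent, and note that the hypothesis $q > 2$ is exactly equivalent to $q' < 2$, the threshold below which $|\zeta|^{-q'}$ is locally integrable in the plane. Boundedness of $g$ on $\overline{D}$ follows at once from H\"older's inequality, $|g(z)| \le \frac{1}{\pi}\|f\|_{L^q(D)}\left(\iint_D |\zeta - z|^{-q'}\,dA\right)^{1/q'}$, the last integral being bounded uniformly in $z \in \overline{D}$ because $D$ is bounded and $q' < 2$. For the H\"older seminorm I would estimate, for $z_1, z_2 \in \overline{D}$ with $h = |z_1 - z_2|$, again by H\"older,
\[
|g(z_1) - g(z_2)| \le \frac{1}{\pi}\|f\|_{L^q(D)}\left(\iint_D \left|\frac{1}{\zeta - z_1} - \frac{1}{\zeta - z_2}\right|^{q'}\,dA(\zeta)\right)^{1/q'}.
\]

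The main obstacle is the sharp evaluation of that last integral, and here the cleanest route is a scaling argument. Enlarging the domain of integration from $D$ to all of $\C$ only increases the integral of the nonnegative integrand. Substituting $\zeta = z_2 + (z_1 - z_2)\omega$ factors $\frac{1}{z_1 - z_2}$ out of the difference of kernels and contributes $h^2$ from the area element, giving $\iint_\C \left|\frac{1}{\zeta - z_1} - \frac{1}{\zeta - z_2}\right|^{q'}dA(\zeta) = C_0\, h^{2 - q'}$, where $C_0 = \iint_\C \left|\frac{1}{\omega} - \frac{1}{\omega - 1}\right|^{q'}dA(\omega)$ is finite precisely because $q' < 2$ controls the two interior singularities while $q' > 1$ controls the $O(|\omega|^{-2})$ decay at infinity. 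Taking the $q'$-th root converts $h^{2-q'}$ into $h^{(2 - q')/q'} = h^{1 - 2/q} = h^\alpha$ with $\alpha = \frac{q-2}{q}$, exactly the claimed exponent. Combining the boundedness and this H\"older estimate gives $g \in C^{0,\alpha}(\overline{D})$.
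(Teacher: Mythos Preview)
Your argument is correct and is essentially the standard proof found in the cited references. Note, however, that the paper itself does not supply a proof of this statement at all: Theorem~\ref{toperator} is quoted as a background result from \cite{Vek} and \cite{BegBook} and is used without proof. So there is no ``paper's own proof'' to compare against; what you have written is a clean, self-contained version of the classical argument (fundamental-solution identity via Green's formula, then H\"older's inequality plus scaling for the regularity), which is exactly how it is done in Vekua's book.
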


There exist alternative integral representations for solutions to nonhomogeneous Cauchy-Riemann equations that satisfy other conditions. One that is useful when solving Schwarz boundary value problems, see Section \ref{sbvp}, is the following.

\begin{theorem}[\cite{Beg}]\label{toperatoralt}
For any $f \in L^1(D)$, 
\[
g(z) = -\frac{1}{\pi} \iint_{D} \left( \frac{f(\zeta)}{\zeta}\frac{\zeta + z}{\zeta - z} + \frac{\overline{f(\zeta)}}{\overline{\zeta} }\frac{1+z\overline{\zeta}}{1-z\overline{\zeta}} \right)\,d\xi\,d\eta,
\]
where $\zeta = \xi + i \eta$, solves
\[
\frac{\p g}{\p\z} = f, 
\]
and $\im{g(0)} = 0$.
\end{theorem}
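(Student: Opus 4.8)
The plan is to verify the two assertions of the theorem independently: the nonhomogeneous Cauchy--Riemann equation $\frac{\p g}{\p\z}=f$ and the normalization $\im g(0)=0$. In both cases the mechanism is to split each of the two kernels by partial fractions into a part that is either independent of $z$ or holomorphic in $z$ on $D$ (hence annihilated by $\frac{\p}{\p\z}$) and a single genuinely singular Cauchy factor, which is governed by Theorem~\ref{toperator}.

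For the differential equation, I would first record the decompositions
\[
\frac{1}{\zeta}\,\frac{\zeta+z}{\zeta-z}=-\frac{1}{\zeta}+\frac{2}{\zeta-z},\qquad \frac{1}{\overline{\zeta}}\,\frac{1+z\overline{\zeta}}{1-z\overline{\zeta}}=\frac{1}{\overline{\zeta}}+\frac{2z}{1-z\overline{\zeta}}.
\]
In the first identity the summand $-1/\zeta$ does not depend on $z$ and drops under $\frac{\p}{\p\z}$, while $2/(\zeta-z)$ is exactly twice the Cauchy kernel of Theorem~\ref{toperator}. In the second identity $1/\overline{\zeta}$ is again independent of $z$, and $2z/(1-z\overline{\zeta})$ is holomorphic in $z$ throughout $D$, since for $\zeta\in D$ its only pole $z=1/\overline{\zeta}$ satisfies $|z|>1$; hence it too is annihilated by $\frac{\p}{\p\z}$. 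Thus the only surviving contribution is $-\frac{1}{\pi}\iint_D \frac{2f(\zeta)}{\zeta-z}\,d\xi\,d\eta$, and applying Theorem~\ref{toperator} to it recovers $f$ up to the explicit numerical factor dictated by the prefactor $-1/\pi$ and the partial-fraction coefficient $2$. I would carry out the differentiation in the distributional sense, testing against $\varphi\in C^\infty_c(D)$ and using the fundamental identity $\frac{\p}{\p\z}\frac{1}{\zeta-z}=-\pi\delta(z-\zeta)$, which is the distributional content of Theorem~\ref{toperator}.

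For the normalization I would simply set $z=0$. There $\frac{\zeta+z}{\zeta-z}$ and $\frac{1+z\overline{\zeta}}{1-z\overline{\zeta}}$ both equal $1$, so the integrand collapses to
\[
\frac{f(\zeta)}{\zeta}+\frac{\overline{f(\zeta)}}{\overline{\zeta}}=2\,\re\!\left(\frac{f(\zeta)}{\zeta}\right),
\]
which is real valued. Integrating a real-valued function against the real area measure $d\xi\,d\eta$ produces a real number, so $g(0)\in\R$ and $\im g(0)=0$. This makes transparent the structural role of the conjugate ($\overline{f}$) term: it is present precisely to supply the complex conjugate that forces $g(0)$ to be real.

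The main obstacle is analytic rather than algebraic. Because $f$ is assumed only in $L^1(D)$ and the kernels carry the extra weights $1/\zeta$ and $1/\overline{\zeta}$, one must justify that the defining integral converges (for a.e.\ $z$, or in a suitable principal-value sense) and that $\frac{\p}{\p\z}$ may be passed under the integral sign; the naive bound $\iint_D |f(\zeta)|/|\zeta|\,d\xi\,d\eta$ can diverge for general $L^1$ data. I would resolve this by approximation: for $f\in L^q(D)$ with $q>2$, Theorem~\ref{toperator} yields $g\in C^{0,\alpha}(\overline{D})$ and all of the manipulations above are classical, after which I would pass to the limit along an $L^1$-approximating sequence from this dense subclass, using continuity of the Cauchy-type transforms to transfer both the differential equation and the normalization to arbitrary $f\in L^1(D)$. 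A direct evaluation on the single datum $f\equiv1$, where $g$ can be computed in closed form, serves as a useful check on the multiplicative constant produced in the differentiation step.
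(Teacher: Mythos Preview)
The paper does not supply its own proof of this statement; it is quoted as a background result from \cite{Beg}, so there is nothing to compare against. Your strategy---partial fractions to isolate the single Cauchy kernel and then invoke Theorem~\ref{toperator}---is the standard way to verify such formulas, and your argument for $\im g(0)=0$ is clean and correct.

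There is one point where carrying your plan through exposes an issue in the statement rather than in your proof. Your decomposition gives
\[
-\frac{1}{\pi}\iint_D f(\zeta)\,\frac{2}{\zeta-z}\,d\xi\,d\eta \;=\; 2\,T[f](z),
\]
so $\dfrac{\p g}{\p\z}=2f$, not $f$. Your own proposed check with $f\equiv 1$ confirms this: $T[1](z)=\bar z$, while the remaining pieces are holomorphic or vanish, so $g=2\bar z+\text{holomorphic}$. The discrepancy is a transcription typo in the displayed formula (the prefactor in Begehr's operator is $-\tfrac{1}{2\pi}$, not $-\tfrac{1}{\pi}$); it does not affect anything downstream in the paper, since only the properties $\frac{\p\psi}{\p\bar z}=A$ and $\im\psi(0)=0$ are used, and those hold for the correctly normalized operator.

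Your caveat about integrability is well placed: for bare $f\in L^1(D)$ the extra $1/\zeta$ weights at the origin are not controlled, and your density argument via $L^q$, $q>2$, is the right way to make the distributional identity rigorous.
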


\begin{comm}
Note that the two integral representations above differ by a holomorphic function. We will later appeal to results from \cite{Vek} that improve the regularity of the first integral representation from Theorem \ref{toperator}, and these results will extend to the integral representation from Theorem \ref{toperatoralt} by this fact. 
\end{comm}

\section{Representation Theorems}

The next theorem is a classic result that has been widely communicated by M. Balk.

\begin{theorem}[\cite{Balk}]
Every function $f$ that satisfies 
\[
\frac{\p^n f}{\p\z^n} = 0
\] is representable by 
\[
f(z) = \sum_{k=0}^{n-1} \z^k f_k(z),
\]
with $f_k \in H(D)$.
\end{theorem}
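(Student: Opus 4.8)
The plan is to argue by induction on the order $n$, using at each stage the elementary fact that a function annihilated by $\p/\p\z$ is holomorphic, so that antiderivatives with respect to $\z$ are determined only up to an additive holomorphic function. The base case $n=1$ is immediate: $\frac{\p f}{\p\z}=0$ says precisely that $f\in H(D)$, giving the representation with the single coefficient $f_0=f$.

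For the inductive step, suppose the statement holds for order $n-1$, and let $f$ satisfy $\frac{\p^n f}{\p\z^n}=0$. I would set $g:=\frac{\p f}{\p\z}$ and observe that $\frac{\p^{n-1} g}{\p\z^{n-1}}=0$, so the inductive hypothesis produces holomorphic functions $g_0,\dots,g_{n-2}\in H(D)$ with
\[
g=\sum_{k=0}^{n-2}\z^k g_k(z).
\]
The next step is to produce an explicit $\z$-antiderivative of $g$. Because each $g_k$ is holomorphic, the product rule gives $\frac{\p}{\p\z}\bigl(\z^{k+1} g_k\bigr)=(k+1)\z^k g_k$, so the function
\[
h(z):=\sum_{k=0}^{n-2}\frac{\z^{k+1}}{k+1}\,g_k(z)
\]
satisfies $\frac{\p h}{\p\z}=g=\frac{\p f}{\p\z}$.

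It then follows that $\frac{\p}{\p\z}(f-h)=0$, so $f-h\in H(D)$; call this holomorphic function $f_0$. Rewriting $f=f_0+h$ and relabeling the holomorphic coefficients as $f_0$ together with $f_{k}:=\frac{1}{k}\,g_{k-1}$ for $k=1,\dots,n-1$ yields exactly
\[
f(z)=\sum_{k=0}^{n-1}\z^k f_k(z),\qquad f_k\in H(D),
\]
completing the induction. The only point requiring care — and the step I would regard as the crux — is the antiderivative construction: one must verify that the candidate $h$ really does differentiate back to $g$, which hinges on the holomorphy of the $g_k$ killing the extra terms in $\frac{\p}{\p\z}(\z^{k+1}g_k)$, and one must invoke the uniqueness of the $\z$-primitive up to a holomorphic summand to identify $f-h$ as the genuinely holomorphic leading coefficient. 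Everything else is bookkeeping of indices.
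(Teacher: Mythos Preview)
Your argument is correct and is precisely Balk's induction that the paper itself invokes (and reproduces in the proof of Theorem~\ref{metarep} for the more general operator $\frac{\p}{\p\z}-A$): reduce the order by applying $\frac{\p}{\p\z}$, use the inductive hypothesis, write down the explicit $\z$-antiderivative $\sum_k \frac{\z^{k+1}}{k+1}g_k$, and identify the holomorphic remainder. There is nothing to add.
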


The classic result is improved in \cite{polyhardy} for functions in the spaces $\Hpn$.

\begin{theorem}[Theorem 2.1 \cite{polyhardy}]\label{polyhardyrep}
For $n$ a positive integer and $0 < p < \infty$, every $f \in \Hpn$ is representable as 
\[
f(z) = \sum_{k=0}^{n-1} \z^k f_k(z),
\]
with $f_k \in H^p(D)$. 
\end{theorem}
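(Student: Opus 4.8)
The plan is to take the representation with holomorphic coefficients as given and then bootstrap downward on the index $k$. By Balk's theorem, since $\frac{\p^n f}{\p\z^n}=0$ we may already write $f(z)=\sum_{k=0}^{n-1}\z^k f_k(z)$ with each $f_k\in H(D)$, so the entire content of the statement is that these particular holomorphic coefficients lie in $\Har$. First I would record the $\z$-derivatives of this representation. Because each $f_k$ is holomorphic, $\frac{\p f_k}{\p\z}=0$, and differentiating the monomials $\z^j$ yields, for $0\le k\le n-1$,
\[
\frac{\p^k f}{\p\z^k}(z)=\sum_{j=k}^{n-1}\frac{j!}{(j-k)!}\,\z^{j-k}f_j(z).
\]
In particular $\frac{\p^{n-1}f}{\p\z^{n-1}}=(n-1)!\,f_{n-1}$, and more generally the $j=k$ term isolates $f_k$ up to the constant $k!$.

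Next I would prove $f_k\in\Har$ by downward induction on $k$, from $k=n-1$ to $k=0$. The base case is immediate: $(n-1)!\,f_{n-1}=\frac{\p^{n-1}f}{\p\z^{n-1}}$ is holomorphic and, by the definition of $\Hpn$, satisfies $\sup_{0<r<1}\int_0^{2\pi}\big|\frac{\p^{n-1}f}{\p\z^{n-1}}(re^{i\theta})\big|^p\,d\theta<\infty$, so $f_{n-1}\in\Har$. For the inductive step, assume $f_{k+1},\dots,f_{n-1}\in\Har$. Solving the derivative identity for the $j=k$ term gives
\[
k!\,f_k(z)=\frac{\p^k f}{\p\z^k}(z)-\sum_{j=k+1}^{n-1}\frac{j!}{(j-k)!}\,\z^{j-k}f_j(z).
\]
On the circle of radius $r$ I then estimate the $L^p$ norm of the right-hand side: the term $\frac{\p^k f}{\p\z^k}$ has finite supremum-integral because $f\in\Hpn$, while for $j>k$ the bound $|\z^{j-k}|=r^{j-k}\le1$ gives $\int_0^{2\pi}|\z^{j-k}f_j(re^{i\theta})|^p\,d\theta\le\int_0^{2\pi}|f_j(re^{i\theta})|^p\,d\theta\le\|f_j\|_{\Har}^p$, which is finite by the inductive hypothesis. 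Combining these with the triangle inequality when $p\ge1$ and with the subadditivity $(\sum x_j)^p\le\sum x_j^p$ (valid for $0<p\le1$) when $p<1$ shows $\sup_{0<r<1}\int_0^{2\pi}|f_k(re^{i\theta})|^p\,d\theta<\infty$, i.e. $f_k\in\Har$. The recursion then closes and delivers the claim for every coefficient.

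The engine of the argument is the elementary observation that multiplication by $\z^{j-k}$ cannot increase the circle-integral $L^p$ norm, since $|\z|=r\le1$ on every circle interior to $D$; this is precisely what lets the non-holomorphic factors be absorbed into already-controlled quantities. I do not anticipate a serious obstacle: once Balk's representation supplies holomorphic coefficients, the derivative formula translates the single hypothesis $f\in\Hpn$ into control of each $f_k$ in descending order. The only point demanding care is the bookkeeping — verifying the derivative identity and checking that the particular derivative $\frac{\p^k f}{\p\z^k}$ invoked at stage $k$ is indeed one of the $n$ derivatives whose supremum-integral is finite by definition of $\Hpn$ — together with separating the two ranges $p\ge1$ and $0<p\le1$ in the norm estimate.
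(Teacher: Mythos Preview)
The paper does not supply its own proof of this statement: it is quoted verbatim from \cite{polyhardy} (as Theorem~2.1 there) and invoked as a black box, most notably inside the proof of Theorem~\ref{metahardyrep}. Consequently there is nothing in the present paper to compare your argument against.

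That said, your argument is correct and is the natural one. The derivative identity
\[
\frac{\p^k f}{\p\z^k}(z)=\sum_{j=k}^{n-1}\frac{j!}{(j-k)!}\,\z^{\,j-k}f_j(z)
\]
is exactly what one obtains from Balk's representation since the $f_j$ are holomorphic, and solving this lower-triangular system from $k=n-1$ downward, using $|\z|\le 1$ to absorb the monomial factors, gives each $f_k\in\Har$. Your handling of the two ranges $p\ge 1$ (Minkowski) and $0<p\le 1$ (the elementary inequality $(\sum a_j)^p\le\sum a_j^p$) is also correct. This is in fact the same triangular-system mechanism that the paper exploits, in matrix form, in the proof of Theorem~\ref{metahardyrep} to reduce the meta-analytic case to the poly-analytic one before appealing to the cited result.
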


The next two theorems extend Theorem 2.1 from \cite{metahardy} where $A$ is a complex constant, with a slightly different form. We prove the first using the argument of Balk from \cite{Balk} for the $A\equiv 0$ case. We prove the second using the argument from the proof of Theorem 2.1 in \cite{metahardy}.

\begin{theorem}\label{metarep}
For $n$ a positive integer and $A \in W^{n-1,q}(D)$, $q>2$, every solution $w$ of 
\begin{equation}
\left( \frac{\p}{\p\z} - A\right)^n  w = 0 \label{metapde}
\end{equation}
has the form 
\begin{equation}
w(z) = e^{\psi(z)}\sum_{k=0}^{n-1} \z^k w_k,
\end{equation}
where 
\[
\psi(z) = -\frac{1}{\pi} \iint_D \frac{A(\zeta)}{\zeta - z}\,d\xi\,d\eta
\]
and $w_k \in H(D)$, for every $k$.
\end{theorem}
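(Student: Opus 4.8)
The plan is to reduce the equation to the purely poly-analytic case by a multiplicative conjugation with $e^{\psi}$ and then invoke the theorem of Balk \cite{Balk}. The starting point is that $\psi = -\frac{1}{\pi}\iint_D \frac{A(\zeta)}{\zeta - z}\,d\xi\,d\eta$ satisfies $\frac{\p\psi}{\p\z} = A$ by Theorem \ref{toperator}; since $A \in W^{n-1,q}(D) \subset L^q(D)$ with $q>2$, the same theorem gives $\psi \in C^{0,\alpha}(\overline{D})$, so that $e^{\pm\psi}$ is continuous, bounded, and nowhere vanishing on $\overline{D}$. The key algebraic fact is the intertwining identity: for sufficiently differentiable $v$, using only $\frac{\p\psi}{\p\z} = A$ and the product rule,
\[
\left(\frac{\p}{\p\z} - A\right)(e^{\psi} v) = e^{\psi}\frac{\p\psi}{\p\z} v + e^{\psi}\frac{\p v}{\p\z} - A e^{\psi} v = e^{\psi}\frac{\p v}{\p\z}.
\]

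Iterating this identity by a short induction on the exponent, each step of which again uses only $\frac{\p\psi}{\p\z} = A$, yields
\[
\left(\frac{\p}{\p\z} - A\right)^n (e^{\psi} v) = e^{\psi}\frac{\p^n v}{\p\z^n}.
\]
Given a solution $w$ of (\ref{metapde}), I would set $v := e^{-\psi} w$, so that $w = e^{\psi} v$. Substituting into the iterated identity and using that $e^{\psi}$ never vanishes gives $\frac{\p^n v}{\p\z^n} = 0$; that is, $v$ is $n$-analytic. The theorem of Balk \cite{Balk} then provides the representation $v = \sum_{k=0}^{n-1}\z^k w_k$ with each $w_k \in H(D)$, and multiplying back by $e^{\psi}$ produces exactly $w = e^{\psi}\sum_{k=0}^{n-1}\z^k w_k$, as claimed.

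The main obstacle is not the algebra but the regularity bookkeeping that makes the formal computation legitimate, and this is precisely where the hypothesis $A \in W^{n-1,q}(D)$, rather than merely $A \in L^q(D)$, is needed. To write $\frac{\p^n v}{\p\z^n}$ and to justify the product- and chain-rule manipulations above, I need $e^{\pm\psi}$ to be $n$ times $\z$-differentiable, hence $\psi$ to be as well. Since $\frac{\p\psi}{\p\z} = A$, one has $\frac{\p^k\psi}{\p\z^k} = \frac{\p^{k-1}A}{\p\z^{k-1}}$ for $1 \le k \le n$, and the top derivative $\frac{\p^{n-1}A}{\p\z^{n-1}}$ lies in $L^q(D)$ exactly because $A \in W^{n-1,q}(D)$; thus $\psi$, and therefore $e^{\pm\psi}$, carries the $n$ weak $\z$-derivatives the argument requires. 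I would carry out the intertwining computation first at the level of weak derivatives on the open disk, verify that $v = e^{-\psi}w$ inherits enough differentiability for Balk's representation to apply, and only then read off the factorization. If one prefers to avoid discussing Balk's theorem in this Sobolev setting directly, the same reduction can be organized as an induction on $n$: set $u := \left(\frac{\p}{\p\z} - A\right)w$, apply the inductive hypothesis to the order-$(n-1)$ equation satisfied by $u$, and then recover $v = e^{-\psi}w$ by $\z$-antidifferentiating the relation $\frac{\p v}{\p\z} = e^{-\psi}u$, the constant of integration being holomorphic.
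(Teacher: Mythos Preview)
Your argument is correct. Your primary route---the conjugation identity $\left(\frac{\p}{\p\z}-A\right)(e^{\psi}v)=e^{\psi}\frac{\p v}{\p\z}$, iterated to reduce directly to $\frac{\p^n v}{\p\z^n}=0$ and then invoking Balk---is a genuinely different and more conceptual organization than the paper's proof. The paper proceeds by explicit induction on $n$: it applies the inductive hypothesis to $f:=\left(\frac{\p}{\p\z}-A\right)w$, writes down by hand a $\z$-antiderivative $g=e^{\psi}\sum_{k=0}^{m-2}\frac{1}{k+1}\z^{k+1}w_k$, checks that $\left(\frac{\p}{\p\z}-A\right)(w-g)=0$, and concludes via the $n=1$ case. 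This is precisely the alternative you sketch in your last sentence. What your conjugation approach buys is a single clean reduction to the known poly-analytic theorem, making the role of $e^{\psi}$ as an intertwiner transparent; what the paper's hands-on induction buys is that it never needs to discuss Balk's theorem in a Sobolev setting or worry about the regularity of $v=e^{-\psi}w$ all at once, since each inductive step only uses the classical $n=1$ similarity principle and an explicit polynomial antiderivative. Your discussion of why $A\in W^{n-1,q}(D)$ is the right hypothesis for the iterated product rule is on point and is exactly the regularity issue the inductive argument sidesteps.
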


\begin{proof}
The case $n = 1$ is a classic and can be found in \cite{Vek}.

We proceed by induction. Suppose that the theorem holds for all $n$ such that $1\leq n \leq m-1$. Let $w$ be a solution to 
\begin{equation*}
\left( \frac{\p}{\p\z} - A\right)^m  w = 0.
\end{equation*}
So, $f = \left( \frac{\p}{\p\z} - A\right)  w $ solves $\left( \frac{\p}{\p\z} - A\right)^{m-1}  f = 0$, and 
\[
f = \left( \frac{\p}{\p\z} - A\right)w = e^{\psi(z)}\sum_{k=0}^{m-2} \z^k w_k
\]
with $w_k \in H(D)$, $0 \leq k \leq m-2$. Observe that 
\[
g = e^{\psi(z)}\sum_{k=0}^{m-2} \frac{1}{k+1}\z^{k+1} w_k
\]
also solves  $\left( \frac{\p}{\p\z} - A\right)^{m}  g = 0$ by direct computation. Consider
\begin{align*}
&\left( \frac{\p}{\p\z} - A\right)\left(w - e^{\psi(z)}\sum_{k=0}^{m-2} \frac{1}{k+1}\z^{k+1} w_k\right) \\
&= e^{\psi(z)}\sum_{k=0}^{m-2} \z^k w_k - \left( \frac{\p}{\p\z} - A\right)\left(e^{\psi(z)}\sum_{k=0}^{m-2} \frac{1}{k+1}\z^{k+1} w_k\right) \\
&= e^{\psi(z)}\sum_{k=0}^{m-2} \z^k w_k - \left(A(z)e^{\psi(z)}\sum_{k=0}^{m-2} \frac{1}{k+1}\z^{k+1} w_k + e^{\psi(z)}\sum_{k=0}^{m-2} \z^k w_k -A(z)e^{\psi(z)}\sum_{k=0}^{m-2} \frac{1}{k+1}\z^{k+1} w_k \right) \\ & = 0.
\end{align*}
Hence, the difference is a solution to the $n = 1$ case, and 
\[
w - e^{\psi}\sum_{k=0}^{m-2} \frac{1}{k+1}\z^{k+1} w_k  = e^{\psi} \phi_o,
\]
where $\phi_0 \in H(D)$. Let $\phi_k = \frac{1}{k+1}w_k$ for $0\leq k \leq m-2$, and we have 
\[
w = e^{\psi}\sum_{\ell=0}^{m-1}\z^\ell \phi_\ell,
\]
where $\phi_\ell\in H(D)$, for $0\leq \ell\leq m-1$. 
\end{proof}

\begin{theorem}\label{metahardyrep}
For $n$ a positive integer, $0 < p < \infty$, and $A \in W^{n-1,\infty}(D)$, every function $w \in H^{n,p}_{A}(D)$ has the form 
\begin{equation}
w(z) = e^{\psi(z)}\sum_{k=0}^{n-1} \z^k w_k(z),
\end{equation}
where 
\[
\psi(z) = -\frac{1}{\pi} \iint_D \frac{A(\zeta)}{\zeta - z}\,d\xi\,d\eta
\]
and $w_k \in H^p(D)$, for every $k$.
\end{theorem}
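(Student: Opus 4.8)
The plan is to reduce the statement to the already-established poly-Hardy case (Theorem \ref{polyhardyrep}) by factoring out the exponential. First I would invoke Theorem \ref{metarep} to write $w = e^{\psi}\sum_{k=0}^{n-1}\z^k w_k$ with each $w_k \in H(D)$; the entire content of the present theorem is to upgrade ``holomorphic'' to ``$H^p(D)$'' for these coefficients. Set $v := w\, e^{-\psi}$, so that $v = \sum_{k=0}^{n-1}\z^k w_k$ is a polynomial in $\z$ with holomorphic coefficients, i.e. an $n$-analytic function. The goal then becomes to show $v \in H^{n,p}(D)$: once that is known, Theorem \ref{polyhardyrep} forces the holomorphic coefficients in its representation to lie in $H^p(D)$, and uniqueness of the representation $\sum_{k}\z^k w_k$ (obtained by repeatedly applying $\p^{\,\cdot}/\p\z^{\,\cdot}$) identifies those coefficients with our $w_k$, giving $w_k \in H^p(D)$.

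The key computation is the identity $\frac{\p^k v}{\p\z^k} = e^{-\psi}\left(\frac{\p}{\p\z}-A\right)^k w$, valid for every $k$, which I would prove by induction on $k$ using $\frac{\p}{\p\z}e^{-\psi} = -A\, e^{-\psi}$, itself a consequence of $\frac{\p\psi}{\p\z}=A$ from Theorem \ref{toperator}. In particular the case $k=n$ gives $\frac{\p^n v}{\p\z^n} = e^{-\psi}\left(\frac{\p}{\p\z}-A\right)^n w = 0$, confirming that $v$ is $n$-analytic.

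To control the poly-Hardy norm of $v$, I would expand the operator $\left(\frac{\p}{\p\z}-A\right)^k$ as a genuine differential operator $\sum_{j=0}^{k} c^{(k)}_j \frac{\p^j}{\p\z^j}$, where each coefficient $c^{(k)}_j$ is a finite sum of products of $A$ and its $\z$-derivatives of order at most $k-1$. Since $k \le n-1$ and $A \in W^{n-1,\infty}(D)$, every such $c^{(k)}_j$ lies in $L^\infty(D)$; moreover $\psi \in C^{0,\alpha}(\overline{D})$ by Theorem \ref{toperator}, so $|e^{-\psi}|$ is bounded on $D$. Combining these facts with the elementary inequality $\left(\sum_j a_j\right)^p \les \sum_j a_j^p$ for finite sums, I obtain for each $k \le n-1$ that
\[
\sup_{0<r<1}\int_0^{2\pi}\Big|\frac{\p^k v}{\p\z^k}(re^{i\theta})\Big|^p d\theta \;\les\; \sum_{j=0}^{k}\;\sup_{0<r<1}\int_0^{2\pi}\Big|\frac{\p^j w}{\p\z^j}(re^{i\theta})\Big|^p d\theta \;\le\; C\,\|w\|_{n,p}^p < \infty,
\]
where the last bound uses $w \in H^{n,p}_A(D)$. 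Hence $v \in H^{n,p}(D)$, and the reduction described above finishes the proof.

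I expect the main obstacle to be the operator-expansion step: one must check that $\left(\frac{\p}{\p\z}-A\right)^k$ never generates derivatives of $A$ of order exceeding $n-1$, so that the hypothesis $A \in W^{n-1,\infty}(D)$ (rather than merely $W^{n-1,q}(D)$, as in Theorem \ref{metarep}) is exactly what keeps the coefficients $c^{(k)}_j$ in $L^\infty$ and thereby preserves the $L^p$-integrability of the angular integrals when they are pulled out as bounded multipliers. The boundedness of $e^{-\psi}$ and the final passage from holomorphic coefficients to $H^p$ coefficients are then routine.
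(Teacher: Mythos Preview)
Your proposal is correct and follows the same overall strategy as the paper: factor $w = e^{\psi}v$, show the polyanalytic factor $v = e^{-\psi}w$ lies in $H^{n,p}(D)$ by controlling $\p^k v/\p\z^k$ in terms of the $\p^j w/\p\z^j$ with bounded coefficients built from $A$ and its derivatives, and then invoke Theorem~\ref{polyhardyrep}. The only difference is bookkeeping: the paper writes the Leibniz expansion of $\p^k(e^{\psi}F)/\p\z^k$ as a lower-triangular matrix in $(F,\p F/\p\z,\ldots)$ and inverts it, whereas your conjugation identity $\p^k v/\p\z^k = e^{-\psi}\bigl(\p/\p\z - A\bigr)^k w$ produces the inverse relation directly and bypasses the matrix inversion.
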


\begin{proof}
Let $f \in \Mpn$. So, by Theorem \ref{metarep}, $f(z) = e^{\psi}F(z)$, where $F(z)  = \sum_{k = 0}^{n-1}\z^k f_k(z)$, $f_k \in H(D)$, and \[
\psi(z) = -\frac{1}{\pi} \iint_D \frac{A(\zeta)}{\zeta - z}\,d\xi\,d\eta.
\]  So, 
\begin{align*}
\begin{pmatrix}
f(z) \\
\frac{\p f}{\p \z}(z) \\
\vdots \\
\frac{\p^{n-1} f}{\p\z^{n-1}}(z) 
\end{pmatrix}
    &= 
\begin{pmatrix}
e^{\psi(z)}F(z) \\
\frac{\p }{\p \z} \left( e^{\psi(z)}F(z) \right)  \\
\vdots \\
\frac{\p^{n-1} }{\p\z^{n-1}} \left( e^{\psi(z)}F(z)  \right)
\end{pmatrix} \\
&= e^{\psi(z)}\begin{pmatrix} 
        1 & 0 & 0 & \cdots & 0\\
       A & 1 & 0 & \cdots & 0 \\
       A^2 + \frac{\p A}{\p\z} & 2A & 1 & \cdots & 0 \\
        \vdots  & \cdots & \ddots & \vdots\\
        P_{(n-1,1)}(A) & P_{(n-1,2)}(A) &  P_{(n-1,3)}(A) & \cdots & 1 
    \end{pmatrix}
    \begin{pmatrix}
        F(z) \\
        \frac{\p F}{\p\z}(z) \\
        \frac{\p^2 F}{\p\z^2}(z) \\
        \vdots \\
        \frac{\p^{n-1} F}{\p\z^{n-1}}(z)
    \end{pmatrix},
\end{align*}
where we note that the matrix is lower triangular, each $P_{(k,j)}(A)$, where $(k,j)$ denotes that the polynomial is the $(k,j)$ entry of the matrix, is a polynomial of order at most $\max\{k-1,j-1\}$ in $A$ and its derivatives up to order at most $\max\{k-1, j - 1\}$, and the entries on the diagonal are all 1. Hence, the matrix, which we will call $[A]$, is invertible, and 
\begin{align*}
 \begin{pmatrix}
        F(z) \\
        \frac{\p F}{\p\z}(z) \\
        \frac{\p^2 F}{\p\z^2}(z) \\
        \vdots \\
        \frac{\p^{n-1} F}{\p\z^{n-1}}(z)
    \end{pmatrix}
    &= 
    e^{-\psi(z)} [A]^{-1} 
    \begin{pmatrix}
f(z) \\
\frac{\p f}{\p \z}(z) \\
\vdots \\
\frac{\p^{n-1} f}{\p\z^{n-1}}(z) 
\end{pmatrix}.
\end{align*}
Since $\psi$, $A$, and all derivatives of $A$ up to order $n -1$ are bounded, it follows that 
\begin{align*}
\left|\left| \frac{\p^j F}{\p\z^j} \right|\right|^p_{H^p(D)}
&= \sup_{0 < r < 1} \int_0^{2\pi} \left| \frac{\p^j F}{\p\z^j}(re^{i\theta}) \right|^p \, d\theta \\
&= \sup_{0 < r < 1} \int_0^{2\pi} \left|e^{-\psi(re^{i\theta})} \sum_{k = 0}^{n-1} [A]^{-1}_{j,k}(re^{i\theta}) \frac{\p^k f}{\p\z^k}(re^{i\theta}) \right|^p \, d\theta \\
&\leq C \sum_{k = 0}^{n-1} \sup_{0 < r < 1} \int_0^{2\pi} \left| \frac{\p^k f}{\p\z^k}(re^{i\theta}) \right|^p \, d\theta < \infty, 
\end{align*}
for each $k$. Hence, $F \in \Hpn$. By Theorem \ref{polyhardyrep}, $F$ can be represented as 
\[
F(z) = \sum_{k = 0}^{n-1} \z^k f_k(z),
\]
where each $f_k \in H^p(D)$. Thus, 
\[
f(z) = e^{\psi(z)}\sum_{k = 0}^{n-1} \z^k f_k(z),
\]
where each $f_k \in H^p(D)$. 
\end{proof}

While the results in Section \ref{sbvp} are in terms of boundary values in the sense of distributions, we show that the representation from Theorem \ref{metahardyrep} allows us to prove that the functions in $\Mpn$ have nontangential boundary values almost everywhere and the functions converge to those boundary values in the $L^p$ norm.  

\begin{theorem}\label{metabv}
For $n$ a positive integer, $0< p < \infty$, and $A \in W^{n-1,\infty}(D)$, every $f \in \Mpn$ has a nontangential boundary value $f_+ \in L^p(\p D)$ almost everywhere on $\p D$ and
\[
\lim_{r\nearrow 1} \int_0^{2\pi} \left| f_+(e^{i\theta})  - f(re^{i\theta}) \right|^p \,d\theta = 0.
\]
\end{theorem}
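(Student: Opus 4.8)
The plan is to push the representation from Theorem~\ref{metahardyrep} out to the boundary, treating $e^{\psi}$ and the powers $\z^k$ as harmless multipliers and extracting the entire boundary behavior from the holomorphic coefficients via Theorem~\ref{bvcon}. First I would write $f(z) = e^{\psi(z)}\sum_{k=0}^{n-1}\z^k f_k(z)$ with $f_k \in H^p(D)$. Since $A \in W^{n-1,\infty}(D) \subseteq L^\infty(D) \subseteq L^q(D)$ for $q>2$ on the bounded domain $D$, Theorem~\ref{toperator} gives $\psi \in C^{0,\alpha}(\overline{D})$; hence $e^\psi$ is uniformly continuous and bounded on $\overline{D}$, say $|e^\psi|\le M$, with boundary trace $e^{\psi(e^{i\theta})}$. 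Each $f_k$ has, by Theorem~\ref{bvcon}, a nontangential boundary value $f_{k,+}\in L^p(\partial D)$ with $\int_0^{2\pi}|f_k(re^{i\theta}) - f_{k,+}(e^{i\theta})|^p\,d\theta \to 0$. This suggests the candidate
\[
f_+(e^{i\theta}) := e^{\psi(e^{i\theta})}\sum_{k=0}^{n-1} e^{-ik\theta} f_{k,+}(e^{i\theta}),
\]
which lies in $L^p(\partial D)$ as a finite sum of bounded multiples of $L^p$ functions.

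For the almost-everywhere nontangential convergence, I would observe that $e^{\psi}$ and $\z^k=\overline{z}^k$ are continuous on $\overline{D}$, hence converge to $e^{\psi(e^{i\theta_0})}$ and $e^{-ik\theta_0}$ along any approach to $e^{i\theta_0}$, while $f_k(z)\to f_{k,+}(e^{i\theta_0})$ nontangentially for $\theta_0$ outside a null set. Intersecting the finitely many full-measure sets on which the $f_k$ converge yields nontangential convergence of the product $f(z)\to f_+(e^{i\theta_0})$ for a.e. $\theta_0$.

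The substance of the argument is the $L^p$ convergence. Writing subscript $r$ for evaluation at $re^{i\theta}$, I would split
\[
f_+ - f(re^{i\theta}) = \bigl(e^{\psi(e^{i\theta})} - e^{\psi_r}\bigr)\sum_{k} e^{-ik\theta}f_{k,+} + e^{\psi_r}\sum_k e^{-ik\theta}\bigl(f_{k,+} - r^k f_{k,r}\bigr).
\]
The first term is controlled by $\sup_\theta|e^{\psi(e^{i\theta})} - e^{\psi_r}|^p$, which tends to $0$ by uniform continuity of $e^\psi$, times the fixed finite quantity $\int_0^{2\pi}|\sum_k e^{-ik\theta}f_{k,+}|^p\,d\theta$. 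For the second term I would use the bound $|e^{\psi_r}|\le M$ together with the elementary inequality $|\sum_{j=1}^N a_j|^p \le \max\{1,N^{p-1}\}\sum_{j=1}^N|a_j|^p$, valid for all $0<p<\infty$, to reduce to each $\int_0^{2\pi}|f_{k,+} - r^k f_{k,r}|^p\,d\theta$; the decomposition $f_{k,+} - r^k f_{k,r} = (f_{k,+}-f_{k,r}) + (1-r^k)f_{k,r}$ then bounds this by the $H^p$ boundary convergence of Theorem~\ref{bvcon} plus $(1-r^k)^p\sup_r\int_0^{2\pi}|f_{k,r}|^p\,d\theta \le (1-r^k)^p\|f_k\|_{H^p}^p$, both of which vanish as $r\nearrow 1$.

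The main obstacle I anticipate is bookkeeping in the regime $0<p<1$, where the ordinary triangle inequality is unavailable and one must consistently use the $p$-subadditivity $|a+b|^p\le|a|^p+|b|^p$ (equivalently the constant $\max\{1,N^{p-1}\}$ above). The only genuinely analytic input is the boundary behavior of the holomorphic coefficients from Theorem~\ref{bvcon}; the non-holomorphic factors contribute nothing beyond uniform continuity and boundedness up to $\partial D$, which is precisely what the H\"older regularity of $\psi$ from Theorem~\ref{toperator} supplies.
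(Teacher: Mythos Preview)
Your proposal is correct and follows essentially the same route as the paper: invoke the representation $f = e^{\psi}\sum_k \z^k f_k$ from Theorem~\ref{metahardyrep}, use the H\"older continuity of $\psi$ to treat $e^{\psi}$ and $\z^k$ as bounded uniformly continuous multipliers, and extract all boundary behavior from Theorem~\ref{bvcon} applied to the $f_k$. Your add-and-subtract splitting differs from the paper's only in the choice of intermediate term (you pair the $e^{\psi}$-difference with the fixed boundary data $f_{k,+}$, the paper pairs its continuous-factor difference with $f_k(re^{i\theta})$ and invokes dominated convergence), and you are, if anything, a bit more explicit about the $0<p<1$ constants; the arguments are otherwise the same.
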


\begin{proof}
To show that the nontangential boundary value exists and the function converges to that nontangential boundary value in the $L^p(\p D)$ norm, we follow the argument of Theorem 2.3 from \cite{metahardy}.

Let $f \in \Mpn$ have the representation $f(z) = e^{\psi(z)}\sum_{k=0}^{n-1} \z^k f_k(z)$ from Theorem \ref{metahardyrep}. By Theorem \ref{bvcon}, each $f_k \in H^p(D)$ has a nontangential boundary value $f_{k+}$ almost everywhere on $\p D$, $f_{k+} \in L^p(\p D)$, and 
\begin{equation}\label{hardyfunctionsconvonbound}
\lim_{r \nearrow 1} \int_0^{2\pi} |f_{k+}(e^{i\theta}) - f_k(re^{i\theta})| \, d\theta  = 0.
\end{equation}

Since $e^{\psi}$ and $\z^k$ are continuous up to the boundary, it follows that 
\[
f_+(e^{i\theta}) = e^{\psi(e^{i\theta})}\sum_{k=0}^{n-1} e^{-ik\theta} f_{k+}(e^{i\theta}), 
\]
and 
\begin{align*}
\int_{0}^{2\pi} |f_+(e^{i\theta})|^p \,d\theta 
&= \int_{0}^{2\pi} |e^{\psi(e^{i\theta})}\sum_{k=0}^{n-1} e^{-ik\theta} f_{k+}(e^{i\theta})|^p \,d\theta \leq 
 C \sum_{k=0}^{n-1} \int_{0}^{2\pi} |f_{k+}(\theta)|^p \,d\theta < \infty,
\end{align*}
where $C$ is a constant that only depends on $p$, $n$, and $A$. 

Now, observe 
\begin{align*}
&\int_0^{2\pi} |f_+(e^{i\theta}) - f(re^{i\theta})|^p \,d\theta \\
&= \int_0^{2\pi} \left|e^{\psi(e^{i\theta})}\sum_{k=0}^{n-1} e^{-ik\theta} f_{k+}(e^{i\theta}) - e^{\psi(re^{i\theta})}\sum_{k=0}^{n-1} r^k e^{-ik\theta} f_k(re^{i\theta}) \right|^p \,d\theta \\
&\leq C\int_0^{2\pi} \left|e^{\psi(e^{i\theta})}\sum_{k=0}^{n-1} e^{-ik\theta} f_{k+}(e^{i\theta}) - e^{\psi(e^{i\theta})}\sum_{k=0}^{n-1} e^{-ik\theta} f_k(re^{i\theta}) \right|^p \,d\theta \\
&\quad \quad + C\int_0^{2\pi} \left|e^{\psi(e^{i\theta})}\sum_{k=0}^{n-1} e^{-ik\theta} f_k(re^{i\theta}) - e^{\psi(re^{i\theta})}\sum_{k=0}^{n-1} r^k e^{-ik\theta} f_k(re^{i\theta}) \right|^p \,d\theta \\
&= C\int_0^{2\pi} \left|e^{\psi(e^{i\theta})} \sum_{k=0}^{n-1}  \left( f_{k+}(e^{i\theta})-  f_k(re^{i\theta}) \right)\right|^p \,d\theta  + C\int_0^{2\pi} \left|\sum_{k=0}^{n-1} \left( e^{\psi(e^{i\theta})} - e^{\psi(re^{i\theta})} r^k\right) f_k(re^{i\theta}) \right|^p \,d\theta \\
&\leq C \sum_{k=0}^{n-1} \int_0^{2\pi} \left|   f_{k+}(e^{i\theta})-  f_k(re^{i\theta}) \right|^p \,d\theta + C_k\sum_{k=0}^{n-1} \int_0^{2\pi} |e^{\psi(e^{i\theta})} - e^{\psi(re^{i\theta})}|^p \, |f_k(re^{i\theta})|^p \,d\theta.
\end{align*}
Note that the left sum in the right hand side approaches zero as $r \nearrow 1$, because of (\ref{hardyfunctionsconvonbound}), and the right sum in the right hand side approaches zero as $r \nearrow 1$ by the Dominated Convergence Theorem. So, 
\begin{equation*}\label{metahardyfunctionsconvonbound}
\lim_{r\nearrow 1} \int_0^{2\pi} \left| f_+(e^{i\theta})  - f(re^{i\theta}) \right|^p \,d\theta = 0.
\end{equation*}

\end{proof}

Next, we show that certain classes of solutions to equation (\ref{metapde}) have boundary values in the sense of distributions. We do so initially by proving a fact about integrable functions.

\begin{theorem}\label{bvdistl1}
Every $f \in L^1(D)$ with nontangential boundary value $f_+\in L^1(\p D)$ such that 
\[
 \lim_{r\nearrow 1} \int_0^{2\pi} \left| f(re^{i\theta})  - f_+(e^{i\theta}) \right|  \,d\theta = 0
\]
has a boundary value in the sense of distributions $f_b$ and $f_+=f_b$ as distributions.  
\end{theorem}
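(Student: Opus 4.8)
The plan is to verify Definition \ref{bvcircle} directly and, at the same time, identify the resulting distribution with the nontangential boundary value $f_+$. First I would fix an arbitrary test function $\varphi \in C^\infty(\p D)$. Since $\p D$ is compact, $\varphi$ is bounded; set $M := \sup_{\theta}|\varphi(\theta)| < \infty$. The quantity whose limit must be controlled is $\int_0^{2\pi} f(re^{i\theta})\,\varphi(\theta)\,d\theta$, and the natural candidate for the limit is
\[
\langle f_+, \varphi \rangle := \int_0^{2\pi} f_+(e^{i\theta})\,\varphi(\theta)\,d\theta,
\]
which is finite because $f_+ \in L^1(\p D)$ and $\varphi$ is bounded.

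The key step is the single estimate
\[
\left| \int_0^{2\pi} f(re^{i\theta})\,\varphi(\theta)\,d\theta - \int_0^{2\pi} f_+(e^{i\theta})\,\varphi(\theta)\,d\theta \right| \leq M \int_0^{2\pi} \left| f(re^{i\theta}) - f_+(e^{i\theta}) \right|\,d\theta,
\]
obtained by combining the two integrals and pulling the bound on $\varphi$ outside. By the $L^1$-convergence hypothesis, the right-hand side tends to $0$ as $r \nearrow 1$. Hence the limit defining $\langle f_b, \varphi \rangle$ exists and equals $\langle f_+, \varphi \rangle$ for every $\varphi \in C^\infty(\p D)$. This establishes both that $f$ has a boundary value in the sense of distributions and that $f_b = f_+$ as distributions.

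I do not expect a genuine obstacle here: the entire content is the displayed estimate, and the only ingredients are the boundedness of a smooth function on the compact circle and the assumed $L^1$ convergence on circles. The one point worth stating carefully is that the assumption $f_+ \in L^1(\p D)$ is precisely what guarantees that $\langle f_+, \varphi \rangle$ is well-defined, so that the identification $f_b = f_+$ is meaningful as an equality of distributions, with $f_+$ acting on test functions by integration.
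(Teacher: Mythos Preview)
Your proof is correct and follows essentially the same approach as the paper: both arguments hinge on the estimate $\left|\int_0^{2\pi} f(re^{i\theta})\varphi(\theta)\,d\theta - \int_0^{2\pi} f_+(e^{i\theta})\varphi(\theta)\,d\theta\right| \leq \sup_\theta|\varphi|\int_0^{2\pi}|f(re^{i\theta}) - f_+(e^{i\theta})|\,d\theta$ together with the assumed $L^1$ convergence. If anything, your version is slightly more streamlined, since you go directly to the comparison with $\langle f_+,\varphi\rangle$ rather than first bounding $|\langle f_b,\varphi\rangle|$ separately as the paper does.
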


\begin{proof}
    For $\varphi \in C^\infty(\p D)$ and $0 < r < 1$, observe that 
    \begin{align*}
        \left|\int_0^{2\pi} f(re^{i\theta}) \varphi(\theta) \,d\theta \right|
        &\leq \sup_\theta|\varphi| \int_0^{2\pi} |f(re^{i\theta})|\,d\theta.
    \end{align*}
    So, 
    \begin{align*}
   \left |\langle f_b, \varphi \rangle \right| :&= \left|\lim_{r \nearrow 1} \int_0^{2\pi} f(re^{i\theta}) \varphi(\theta) \,d\theta \right|\\
    &\leq \sup_\theta|\varphi| \lim_{r \nearrow 1}\int_0^{2\pi} |f(re^{i\theta})|\,d\theta  \\
    &=  \sup_\theta|\varphi|\int_0^{2\pi} |f_+(e^{i\theta})|\,d\theta< \infty,
    \end{align*}
since $f$ converges to $f_+$ in the $L^1$ norm. Also, we have 
 \begin{align*}
    \left |\langle f_b  - f_+, \varphi \rangle \right| &\leq
    \lim_{r\nearrow 1} \int_0^{2\pi} \left| f(re^{i\theta})  - f_+(e^{i\theta}) \right|  \,d\theta = 0.
 \end{align*}
\end{proof}

\begin{corr}
For $n$ a positive integer and $A \in W^{n-1,\infty}(D)$, every $f \in H^{n,1}_A(D)$ has a boundary value in the sense of distributions $f_b$, and $f_b = f_+$ as distributions, where $f_+$ is the nontangential boundary value of $f$.
\end{corr}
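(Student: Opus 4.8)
\section*{Proof proposal for the final Corollary}

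The plan is to read off the result as a direct consequence of Theorem \ref{metabv} and Theorem \ref{bvdistl1}, so that the real work reduces to verifying that the hypotheses of the latter are satisfied by an arbitrary $f \in H^{n,1}_A(D)$.

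First I would apply Theorem \ref{metabv} in the case $p = 1$. Since $A \in W^{n-1,\infty}(D)$, that theorem guarantees that $f$ possesses a nontangential boundary value $f_+ \in L^1(\p D)$ almost everywhere on $\p D$ and that $f$ converges to $f_+$ in the $L^1(\p D)$ norm, i.e. $\lim_{r\nearrow 1} \int_0^{2\pi} |f_+(e^{i\theta}) - f(re^{i\theta})|\,d\theta = 0$. This already supplies two of the three hypotheses needed to invoke Theorem \ref{bvdistl1}.

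The remaining hypothesis is that $f \in L^1(D)$, i.e. area-integrability over the whole disk rather than integrability merely on circles. I would obtain this from the defining norm of the meta-Hardy space: the $k = 0$ term gives $M := \sup_{0<r<1} \int_0^{2\pi} |f(re^{i\theta})|\,d\theta < \infty$, and integrating in the radial variable against the area Jacobian yields $\iint_D |f|\,d\xi\,d\eta = \int_0^1 \left( \int_0^{2\pi} |f(re^{i\theta})|\,d\theta \right) r\,dr \leq \tfrac{1}{2} M < \infty$, so that $f \in L^1(D)$.

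With $f \in L^1(D)$, $f_+ \in L^1(\p D)$, and $L^1$ convergence to $f_+$ all in hand, Theorem \ref{bvdistl1} applies directly and produces the boundary value in the sense of distributions $f_b$ together with the identification $f_b = f_+$ as distributions, completing the argument. I do not anticipate a genuine obstacle here; the only point requiring even a line of computation is the verification that membership in the meta-Hardy space forces area-integrability on $D$, which is immediate once the radial integral is carried out. The essential content of the corollary is already encapsulated in the two theorems it cites, and the proof amounts to threading them together.
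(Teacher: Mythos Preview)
Your proposal is correct and follows exactly the route the paper intends: the corollary is stated without proof precisely because it is immediate from Theorem \ref{metabv} (with $p=1$) and Theorem \ref{bvdistl1}, and you have correctly identified and verified the one small hypothesis---area-integrability $f\in L^1(D)$---that needs a line of justification before Theorem \ref{bvdistl1} can be invoked.
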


Now, we restrict our view to $A \in C^\infty(\overline{D})$. We know that if $A \in C^\infty(\overline{D})$, then $A \in W^{n-1,\infty}(D)$ and from \cite{Vek}, $
\psi(z) = -\frac{1}{\pi} \iint_D \frac{A(\zeta)}{\zeta - z}\,d\xi\,d\eta
\in C^\infty(\overline{D})$. This will lead to a quick proof of the next result. 

\begin{theorem}
For $n$ a positive integer and $A \in C^\infty(\overline{D})$, every solution $f$ of 
\[
\left(\frac{\p }{\p\z}- A\right)^n  f= 0
\]
with representation
\[
f(z) = e^{\psi(z)}\sum_{k=0}^{n-1} \z^k f_k(z),
\]
where
\[
\psi(z) = -\frac{1}{\pi} \iint_D \frac{A(\zeta)}{\zeta - z}\,d\xi\,d\eta
\]
and $f_k \in H_b$, for every $k$, has a boundary value in the sense of distributions.

\end{theorem}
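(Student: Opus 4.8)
The plan is to reduce the statement to a single-factor lemma and then exploit that $\psi$ is smooth up to the boundary. Since $A \in C^\infty(\overline{D})$ we have, as recalled just before the statement, $\psi \in C^\infty(\overline{D})$, so each coefficient $g_k(z) := e^{\psi(z)}\z^k$ lies in $C^\infty(\overline{D})$ (it is a product of $e^\psi$ and a polynomial in $\z$, both smooth on $\overline{D}$). Because the defining limit in Definition \ref{bvcircle} is linear in $f$, it suffices to prove the following: if $g \in C^\infty(\overline{D})$ and $h \in H_b$, then the product $gh$ has a boundary value in the sense of distributions. Applying this with $g = g_k$, $h = f_k$ and summing over $0 \le k \le n-1$ then gives the theorem, since a finite sum of functions each admitting the limit in Definition \ref{bvcircle} again admits that limit.

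To prove the lemma I would fix $\varphi \in C^\infty(\p D)$ and show $\lim_{r\nearrow 1}\int_0^{2\pi} g(re^{i\theta})h(re^{i\theta})\varphi(\theta)\,d\theta$ exists. Since $h \in H_b$, Theorem \ref{GHJH23point1} supplies constants $C>0$ and $\alpha \ge 0$ with $|h(re^{i\theta})| \le C(1-r)^{-\alpha}$, whence $\int_0^{2\pi}|h(re^{i\theta})|\,d\theta \le 2\pi C (1-r)^{-\alpha}$. The obstacle is precisely this possible growth: one cannot simply replace $g(re^{i\theta})$ by its boundary trace $g(e^{i\theta})$ and use $h \in H_b$ on the fixed test function $(g|_{\p D})\varphi$, because the resulting error is only $O(1-r)$ pointwise, which need not beat a blow-up of order $\alpha \ge 1$.

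To decouple the $r$-dependence of the smooth factor from the order of $h$, I would Taylor expand $g$ in the radial variable to sufficiently high order. Fixing an integer $m \ge \alpha$ and viewing $g(re^{i\theta})$ as a smooth function of $(1-r)$, smoothness up to the boundary yields $g(re^{i\theta}) = \sum_{j=0}^{m}(1-r)^j a_j(\theta) + R_m(r,\theta)$, where each $a_j \in C^\infty(\p D)$ (in particular $a_0 = g|_{\p D}$) and $|R_m(r,\theta)| \le C_m (1-r)^{m+1}$. Substituting, the integral becomes $\sum_{j=0}^m (1-r)^j \int_0^{2\pi} h(re^{i\theta}) a_j(\theta)\varphi(\theta)\,d\theta$ plus a remainder bounded by a constant times $(1-r)^{m+1-\alpha}$, which tends to $0$ because $m \ge \alpha$. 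For each fixed $j$, $a_j\varphi \in C^\infty(\p D)$, so $h \in H_b$ guarantees that $\int_0^{2\pi} h(re^{i\theta}) a_j(\theta)\varphi(\theta)\,d\theta$ converges to $\langle h_b, a_j\varphi\rangle$ as $r\nearrow 1$; the prefactor $(1-r)^j$ then annihilates every term with $j \ge 1$, leaving only the $j=0$ term, whose limit is $\langle h_b, (g|_{\p D})\varphi\rangle$. Hence the limit exists, which proves the lemma and therefore the theorem.

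The main difficulty, as flagged above, is the interplay between the $r$-dependence of $g(re^{i\theta})$ and the a priori unbounded distributional order $\alpha$ of $h$. The Taylor expansion is exactly what resolves it: each extra power of $(1-r)$ extracted from $g$ is traded against the growth of $h$ until the remainder is controlled, while the positive powers $(1-r)^j$ simultaneously suppress all the intermediate Taylor terms, so only the genuine boundary contribution survives.
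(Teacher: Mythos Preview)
Your proof is correct and in fact more complete than the paper's. Both arguments start with the same decomposition: write $f = \sum_k g_k f_k$ with $g_k = e^\psi \z^k \in C^\infty(\overline{D})$ and $f_k \in H_b$, and then handle each summand. At that point the paper simply asserts that $\lim_{r\nearrow 1}\int_0^{2\pi} f_k(re^{i\theta})\, e^{\psi(re^{i\theta})-ik\theta}\varphi(\theta)\,d\theta$ exists, invoking only that $f_k \in H_b$ and that $\theta\mapsto e^{\psi(re^{i\theta})}$ is smooth for each fixed $r$; the $r$-dependence of the effective test function is not addressed. You single out exactly this issue and resolve it by combining the growth estimate $|h|\le C(1-r)^{-\alpha}$ from Theorem~\ref{GHJH23point1}(3) with a radial Taylor expansion of the smooth factor to order $m\ge\alpha$, so that the remainder is killed and the surviving terms are tested against fixed $C^\infty(\p D)$ functions. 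This is a genuine addition: it supplies the justification the paper leaves implicit, and as a bonus identifies the limit as $(gh)_b = g|_{\p D}\, h_b$. An alternative (and perhaps the route the paper has in mind) is to use the Poisson representation in Theorem~\ref{GHJH23point1}(2) to rewrite $\int h(re^{i\theta})\varphi_r(\theta)\,d\theta$ as $\langle h_b, P_r*\varphi_r\rangle$ and then use that $P_r*\varphi_r\to\varphi_1$ in $C^\infty(\p D)$; your growth-plus-Taylor argument is equally valid and arguably more elementary.
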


\begin{proof}
Observe that, for $0 < r < 1$ and $\varphi \in C^\infty(\p D)$, we have
\begin{align*}
\int_0^{2\pi} f(re^{i\theta}) \varphi(\theta) \,d\theta
&= \int_0^{2\pi} e^{\psi(re^{i\theta})}\sum_{k=0}^{n-1} r^k e^{-ik\theta} f_k(re^{i\theta}) \varphi(\theta) \,d\theta \\
&=  \sum_{k=0}^{n-1} r^k \int_0^{2\pi} e^{\psi(re^{i\theta})-ik\theta}  f_k(re^{i\theta}) \varphi(\theta) \,d\theta.
\end{align*}
Since each $f_k \in H_b$ and $e^\psi$ is $C^\infty(\p D)$ for every $r$, $0\leq r \leq 1$, it follows that 
\[
\lim_{r\nearrow 1} \left|\int_0^{2\pi} f_k(re^{i\theta}) e^{\psi(re^{i\theta})-ik\theta}\varphi(\theta) \,d\theta \right|< \infty,
\]
for each $k$. Thus, 
\[
\lim_{r \nearrow 1} \left|\int_0^{2\pi} f(re^{i\theta}) \varphi(\theta) \,d\theta \right| \leq \lim_{r \nearrow 1}  \sum_{k=0}^{n-1} \left|\int_0^{2\pi} e^{\psi(re^{i\theta})-ik\theta}  f_k(re^{i\theta}) \varphi(\theta) \,d\theta \right| < \infty.
\]

\end{proof}

\begin{corr}
For $n$ a positive integer, $0< p \leq 1$, and $A \in C^{\infty}(\overline{D})$, every $f \in \Mpn$ has a boundary value in the sense of distributions.
\end{corr}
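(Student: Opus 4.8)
The plan is to obtain this as a direct consequence of the representation in Theorem \ref{metahardyrep}, the containment $\bigcup_{0<p\le\infty} H^p(D) \subset H_b$, and the preceding theorem, which already establishes a distributional boundary value for a meta-analytic function whose holomorphic factors lie in $H_b$. In effect, nothing genuinely new needs to be proved: the task is to verify that the hypotheses of those three results are simultaneously met under the assumption $A \in C^\infty(\overline{D})$ and then to chain them together.

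First I would fix $f \in \Mpn$ with $0 < p \le 1$ and $A \in C^\infty(\overline{D})$. Since $C^\infty(\overline{D}) \subset W^{n-1,\infty}(D)$, as noted in the text, Theorem \ref{metahardyrep} applies and yields the representation
\[
f(z) = e^{\psi(z)} \sum_{k=0}^{n-1} \z^k f_k(z), \qquad \psi(z) = -\frac{1}{\pi}\iint_D \frac{A(\zeta)}{\zeta - z}\,d\xi\,d\eta,
\]
with each $f_k \in H^p(D)$. Next I would invoke Corollary 3.1 of \cite{GHJH2}, which guarantees that every function in $H^p(D)$ satisfies the growth bound (3) of Theorem \ref{GHJH23point1}; by the equivalence in that theorem, each $f_k$ then possesses a boundary value in the sense of distributions, that is, $f_k \in H_b$. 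Finally, since $A \in C^\infty(\overline{D})$ and the holomorphic factors $f_k$ all lie in $H_b$, the preceding theorem applies verbatim to the representation displayed above and produces a boundary value in the sense of distributions for $f$.

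There is no serious obstacle here; the only point requiring care is the compatibility of the hypotheses across the three cited results, in particular that $C^\infty(\overline{D})$ feeds simultaneously into the $W^{n-1,\infty}(D)$ hypothesis of Theorem \ref{metahardyrep} and the $C^\infty(\overline{D})$ hypothesis of the preceding theorem. It is worth emphasizing that the argument is insensitive to the precise value of $p$ and would run identically for any $0 < p < \infty$; the restriction $0 < p \le 1$ simply isolates the regime lying beyond the reach of the integrable theory, since for $p < 1$ the boundary functions in $L^p(\p D)$ need not belong to $L^1(\p D)$, so that Theorem \ref{bvdistl1} and its corollary no longer apply. This is precisely the setting in which the representation-plus-$H_b$ method is needed.
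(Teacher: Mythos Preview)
Your argument is correct and matches the paper's intended reasoning: the corollary is stated in the paper without proof precisely because it follows immediately by combining Theorem~\ref{metahardyrep} (using $C^\infty(\overline{D})\subset W^{n-1,\infty}(D)$), the inclusion $H^p(D)\subset H_b$, and the preceding theorem. Your closing remark explaining why the restriction $0<p\le 1$ is the genuinely new case is also accurate and well put.
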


\section{Schwarz boundary value problem}\label{sbvp}

In this section, we exploit a construction from \cite{WB2} to solve a higher-order Schwarz boundary value problem for meta-analytic functions. From \cite{WB2}, we have the following. 

\begin{theorem}[Theorem 2.20 \cite{WB2}]\label{schwarznoint}
For $n$ a positive integer, the Schwarz problem 
    \begin{numcases}{}
        \frac{\p^n f_n}{\p\z^n} = 0 \nonumber\\
        \re\left\{ \left( \frac{\p^k f_n}{\p\z^k}\right)_b \right\} = \re\left\{ (f_{n-k})_b\right\} =  \re\left\{ (h_{n-1-k})_b - \sum_{\ell = 1}^{n-k-1} \frac{(-1)^\ell}{\ell!} e^{-i\ell(\cdot)} (f_{n-\ell})_b\right\} \\
        \im\left\{ \frac{\p^k f_n}{\p\z^k}(0) \right\} = c_{n-1-k} \nonumber
    \end{numcases}
where each $f_k$ solves the Schwarz problem
    \begin{numcases}{}
        \frac{\p f_k}{\p\z} = f_{k-1} \nonumber\\
        \re\left\{ (f_k)_b\right\} = \re\left\{ (h_{k-1})_b - \sum_{\ell = 1}^{k-1} \frac{(-1)^\ell}{\ell!} e^{-i\ell(\cdot)} (f_{k-\ell})_b \right\} \\
        \im\left\{ f_k(0)\right\} =  c_{k-1} \nonumber
    \end{numcases}
with $h_{k-1} \in H_b$, and $c_{k-1} \in \R$,
for each $k$ with $1 \leq k \leq n$, is solved by 
\begin{align*}
 f_n(z) = ic_{n-1} - I_{n-1} + \frac{1}{2\pi} \langle (h_{n-1})_b, P_r(\theta - \cdot) \rangle - \sum_{\ell = 1}^{n-1} \frac{(-1)^\ell}{\ell!} \z^\ell f_{n-\ell}(z), 
\end{align*}
where each $f_k$ is described by 
\[
f_k(z) = ic_{k-1} - I_{k-1} + \frac{1}{2\pi} \langle (h_{k-1})_b, P_r(\theta - \cdot) \rangle - \sum_{\ell = 1}^{k-1} \frac{(-1)^\ell}{\ell!} \z^\ell f_{k-\ell}(z),
\]
with
\[
I_{k-1} := \frac{i}{2\pi} \langle \im\{(h_{k-1})_b\}, 1\rangle,
\] 
for $1 \leq k \leq n$, and $f_0 \equiv 0$.
\end{theorem}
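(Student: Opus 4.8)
The plan is to prove the statement by induction on $k$, $1 \le k \le n$, showing that the explicitly given $f_k$ solves its Schwarz subproblem; the case $k = n$ then yields the asserted solution of the full problem. The organizing observation is that, since $f_0 \equiv 0$ and each $f_k$ will be shown to satisfy $\frac{\p f_k}{\p\z} = f_{k-1}$, one has $\frac{\p^j f_k}{\p\z^j} = f_{k-j}$ for $0 \le j \le k$; in particular $\frac{\p^n f_n}{\p\z^n} = f_0 = 0$ and $\frac{\p^k f_n}{\p\z^k} = f_{n-k}$, so the $k$-th boundary condition of the $f_n$-problem is exactly the boundary condition of the $(n-k)$-th subproblem. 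Throughout I will use that, by part (2) of Theorem \ref{GHJH23point1}, the Poisson-integral term satisfies $\frac{1}{2\pi}\langle (h_{k-1})_b, P_r(\theta - \cdot)\rangle = h_{k-1}(re^{i\theta})$, so the formula for $f_k$ may be read as $f_k = ic_{k-1} - I_{k-1} + h_{k-1} - \sum_{\ell=1}^{k-1} \frac{(-1)^\ell}{\ell!}\z^\ell f_{k-\ell}$ with $h_{k-1} \in H_b$ holomorphic.

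For the differential equation I would differentiate this expression. The constants and the holomorphic term $h_{k-1}$ are annihilated by $\frac{\p}{\p\z}$, and the product rule gives $\frac{\p}{\p\z}(\z^\ell f_{k-\ell}) = \ell \z^{\ell-1} f_{k-\ell} + \z^\ell f_{k-\ell-1}$ using the inductive hypothesis $\frac{\p f_{k-\ell}}{\p\z} = f_{k-\ell-1}$. Reindexing the first family of terms by $j = \ell - 1$ and recalling $f_0 \equiv 0$ (which kills the top term of the second family), the two resulting sums cancel termwise except for the single surviving $j = 0$ term $f_{k-1}$; hence $\frac{\p f_k}{\p\z} = f_{k-1}$. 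I expect this algebraic cancellation to be routine.

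For the boundary conditions I would first confirm that $f_k$ has a boundary value in the sense of distributions and identify it. The constant terms are harmless, $h_{k-1}$ contributes $(h_{k-1})_b$ by definition of $H_b$ and Theorem \ref{GHJH23point1}, and for each product term one computes, for $\varphi \in C^\infty(\p D)$, $\int_0^{2\pi} \z^\ell f_{k-\ell}(re^{i\theta})\varphi(\theta)\,d\theta = r^\ell \int_0^{2\pi} e^{-i\ell\theta} f_{k-\ell}(re^{i\theta})\varphi(\theta)\,d\theta$; since $r^\ell \to 1$ and $e^{-i\ell(\cdot)}\varphi \in C^\infty(\p D)$, the inductive hypothesis that $f_{k-\ell}$ has a distributional boundary value forces this limit to exist and equal $\langle e^{-i\ell(\cdot)}(f_{k-\ell})_b, \varphi\rangle$. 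Consequently $(f_k)_b = ic_{k-1} - I_{k-1} + (h_{k-1})_b - \sum_{\ell=1}^{k-1}\frac{(-1)^\ell}{\ell!} e^{-i\ell(\cdot)}(f_{k-\ell})_b$. Taking real parts and noting that $ic_{k-1}$ and $I_{k-1} = \frac{i}{2\pi}\langle \im\{(h_{k-1})_b\},1\rangle$ are purely imaginary, the first two terms drop out and the prescribed condition $\re\{(f_k)_b\} = \re\{(h_{k-1})_b - \sum_{\ell=1}^{k-1}\frac{(-1)^\ell}{\ell!}e^{-i\ell(\cdot)}(f_{k-\ell})_b\}$ follows.

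Finally, for the normalization at the origin I would evaluate at $z = 0$: every product term carries a factor $\z^\ell$ with $\ell \ge 1$ and vanishes, while $P_0 \equiv 1$ gives $h_{k-1}(0) = \frac{1}{2\pi}\langle (h_{k-1})_b, 1\rangle$, so $f_k(0) = ic_{k-1} - I_{k-1} + \frac{1}{2\pi}\langle (h_{k-1})_b,1\rangle$. The term $I_{k-1}$ was chosen precisely so that $-\im\{I_{k-1}\} + \im\{h_{k-1}(0)\} = 0$, leaving $\im\{f_k(0)\} = c_{k-1}$. The base case $k = 1$ (empty sum, $f_1 = ic_0 - I_0 + h_0$) is checked directly by the same computations. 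The one place demanding genuine care — and the main obstacle — is the distributional boundary-value step: justifying the existence and multiplicative behavior of $(f_k)_b$ under the smooth factors $e^{-i\ell\theta}$ together with the interior weights $r^\ell$, since the boundary condition is only meaningful once these distributional limits are known to exist. The differential-equation and origin computations are then essentially bookkeeping.
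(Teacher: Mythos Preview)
Your proposal is correct, and the inductive verification you outline is the natural one: the telescoping cancellation in the $\bar\partial$-computation, the distributional boundary-value bookkeeping with the smooth multipliers $e^{-i\ell\theta}$, and the choice of $I_{k-1}$ to kill the imaginary part at the origin all go through as you describe. Note, however, that the present paper does \emph{not} give a proof of this statement at all --- it is quoted verbatim as Theorem~2.20 of \cite{WB2} and used as a black box to feed into Theorems~\ref{schwarzmeta} and~\ref{schwarzmetanodivide}. So there is no in-paper proof to compare against; your argument would serve as a self-contained replacement for the citation.
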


Theorem \ref{schwarznoint} extends Theorem 3.4 in \cite{Beg} where the boundary condition is required to be in terms of a continuous function.  While the boundary condition is not as straightforward as the one in Theorem 3.4 of \cite{Beg}, the boundary condition is the direct result of the natural extension of the representation formula found in Theorem 3.4 of \cite{Beg} when utilizing the Poisson integral representation of boundary values in the sense of distributions of holomorphic functions from Theorem 3.1 of \cite{GHJH2}. We appeal to the constructed sequence of functions $\{f_k\}$ in Theorem \ref{schwarznoint} to extend this theorem to the meta-analytic setting. The following theorem extends the results of Theorem 3.2 and Theorem 3.3 in \cite{metahardy}, in the case where $a \equiv 1$ and $b \equiv 0$ in those theorems, to solve a Schwarz boundary value problem for meta-analytic functions with boundary condition only in terms of boundary values in the sense of distributions.

\begin{theorem}\label{schwarzmeta}
For $n$ a positive integer and $A \in W^{n-1,q}(D)$, $q>2$, the Schwarz problem 
    \begin{numcases}{}
        \left(\frac{\p }{\p\z} - A\right)^n w = 0 \label{pde}\\
        \re\left\{ \left( \frac{\p^k w}{\p\z^k}/ e^{\psi}\right)_b \right\} = \re\left\{ (f_{n-k})_b\right\} =  \re\left\{ (h_{n-1-k})_b - \sum_{\ell = 1}^{n-k-1} \frac{(-1)^\ell}{\ell!} e^{-i\ell(\cdot)} (f_{n-\ell})_b\right\} \label{bc}\\
        \im\left\{ \left(\frac{\p^k w}{\p\z^k}/e^{\psi}\right)(0) \right\} = c_{n-1-k} \label{impt}
    \end{numcases}
where each $f_k$ solves the Schwarz problem
    \begin{numcases}{}
        \frac{\p f_k}{\p\z} = f_{k-1} \nonumber\\
        \re\left\{ (f_k)_b\right\} = \re\left\{ (h_{k-1})_b - \sum_{\ell = 1}^{k-1} \frac{(-1)^\ell}{\ell!} e^{-i\ell(\cdot)} (f_{k-\ell})_b \right\} \nonumber\\
        \im\left\{ f_k(0)\right\} =  c_{k-1} \nonumber
    \end{numcases}
with $h_{k-1} \in H_b$, and $c_{k-1} \in \R$,
for each $k$ with $1 \leq k \leq n$, is solved by 
\begin{align*}
w(z) = e^{\psi(z)}\left[ic_{n-1} - I_{n-1} + \frac{1}{2\pi} \langle (h_{n-1})_b, P_r(\theta - \cdot) \rangle - \sum_{\ell = 1}^{n-1} \frac{(-1)^\ell}{\ell!} \z^\ell f_{n-\ell}(z)\right], 
\end{align*}
where each $f_k$ is described by 
\[
f_k(z) = ic_{k-1} - I_{k-1} + \frac{1}{2\pi} \langle (h_{k-1})_b, P_r(\theta - \cdot) \rangle - \sum_{\ell = 1}^{k-1} \frac{(-1)^\ell}{\ell!} \z^\ell f_{k-\ell}(z),
\]
with
\[
I_{k-1} := \frac{i}{2\pi} \langle \im\{(h_{k-1})_b\}, 1\rangle,
\] 
for $1 \leq k \leq n$, and $f_0 \equiv 0$
and 
\[
\psi(z) = -\frac{1}{\pi} \iint_D \frac{A(\zeta)}{\zeta - z}\,d\xi\,d\eta.
\]
\end{theorem}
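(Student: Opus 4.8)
The plan is to reduce the entire problem to the poly-analytic Schwarz problem already solved in Theorem \ref{schwarznoint} by means of the exponential factor $e^\psi$. First I would record the single structural identity on which everything rests: since $\psi(z) = -\frac{1}{\pi}\iint_D \frac{A(\zeta)}{\zeta - z}\,d\xi\,d\eta$ satisfies $\frac{\p\psi}{\p\z} = A$ by Theorem \ref{toperator}, the product rule gives, for any sufficiently differentiable $g$,
\[
\left(\frac{\p}{\p\z} - A\right)(e^\psi g) = e^\psi\frac{\p g}{\p\z},
\]
because the term $\frac{\p\psi}{\p\z}e^\psi g = Ae^\psi g$ produced by differentiating $e^\psi$ is cancelled exactly by $-A e^\psi g$. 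Iterating this identity $k$ times yields $\left(\frac{\p}{\p\z} - A\right)^k(e^\psi g) = e^\psi\frac{\p^k g}{\p\z^k}$, which intertwines the meta-analytic operator with an ordinary power of $\dbar$.

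With this in hand, let $\{f_k\}_{k=1}^n$, and in particular $f_n$, be the functions produced by Theorem \ref{schwarznoint}, so that $f_n$ solves $\frac{\p^n f_n}{\p\z^n} = 0$ together with the stated real boundary conditions and normalizations, and so that $\frac{\p^j f_n}{\p\z^j} = f_{n-j}$ for $0 \le j \le n$ (with $f_0 \equiv 0$) by the recursion $\frac{\p f_k}{\p\z} = f_{k-1}$. I would define $w := e^\psi f_n$, which is exactly the bracketed expression in the statement. Applying the intertwining identity with $g = f_n$ and $k = n$ gives $\left(\frac{\p}{\p\z} - A\right)^n w = e^\psi\frac{\p^n f_n}{\p\z^n} = 0$, so (\ref{pde}) holds.

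For the boundary conditions I would use the intertwining identity at each order $k$: $\left(\frac{\p}{\p\z} - A\right)^k w = e^\psi\frac{\p^k f_n}{\p\z^k} = e^\psi f_{n-k}$, hence $\left(\frac{\p}{\p\z} - A\right)^k w / e^\psi = f_{n-k}$ as genuine functions on $D$. Because these two functions are literally equal on $D$, their boundary values in the sense of distributions coincide and their values at the origin coincide; thus (\ref{bc}) and (\ref{impt}) are nothing other than the real boundary data and normalization that $f_{n-k}$ already satisfies by Theorem \ref{schwarznoint}, with the index bookkeeping $\im\{f_{n-k}(0)\} = c_{(n-k)-1} = c_{n-1-k}$ matching exactly. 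Here I should emphasize the natural reading that the higher-order boundary derivative in (\ref{bc}) and (\ref{impt}) is the meta-analytic derivative $\left(\frac{\p}{\p\z} - A\right)^k w$ rather than the plain $\frac{\p^k w}{\p\z^k}$: it is precisely the meta-derivative that lets $e^\psi$ factor out cleanly, whereas the ordinary derivative would introduce spurious lower-order terms involving $A$ and its derivatives.

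The step requiring the most care, though it is not a deep obstacle, is this boundary bookkeeping together with the regularity of $e^\psi$. Since $A \in W^{n-1,q}(D) \subset L^q(D)$ with $q > 2$, Theorem \ref{toperator} gives $\psi \in C^{0,\alpha}(\overline{D})$ with $\alpha = \frac{q-2}{q}$, so $e^\psi$ is continuous, bounded, and nonvanishing on $\overline{D}$; this legitimizes dividing by $e^\psi$ and guarantees that passing between $\left(\frac{\p}{\p\z} - A\right)^k w$ and $f_{n-k}$ neither creates nor destroys the existence of boundary values in the sense of distributions. Once these regularity remarks are in place, the verification is a direct reduction to Theorem \ref{schwarznoint}, and no singular integral operators are needed beyond the single Cauchy-type transform defining $\psi$.
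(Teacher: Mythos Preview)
Your proposal is correct and follows essentially the same approach as the paper: define $w=e^\psi f_n$ with $f_n$ coming from Theorem \ref{schwarznoint}, and reduce all three conditions to the poly-analytic problem already solved there. The paper's proof is only three lines (``$w/e^\psi$ is a solution of the Schwarz problem solved in Theorem \ref{schwarznoint}''), whereas you make explicit the intertwining identity $\left(\frac{\p}{\p\z}-A\right)(e^\psi g)=e^\psi\,\frac{\p g}{\p\z}$ and the H\"older regularity of $\psi$ that justifies dividing by $e^\psi$; you also flag the ambiguity between $\frac{\p^k w}{\p\z^k}/e^\psi$ and $\left(\frac{\p}{\p\z}-A\right)^k w/e^\psi$, which the paper's short proof leaves implicit by simply identifying both with $\frac{\p^k}{\p\z^k}(w/e^\psi)=f_{n-k}$.
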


\begin{proof}
The existence of such a sequence $\{f_k\}$ is proved in \cite{WB2}. By direct computation, $w$ is a solution of the equation (\ref{pde}). Conditions (\ref{bc}) and (\ref{impt}) are satisfied by $w/e^\psi$ by observing that $w/e^\psi$ is a solution of the Schwarz problem  solved in Theorem \ref{schwarznoint}. 
\end{proof}

\begin{comm}From \cite{WB2}, if the $h_k$ in the hypothesis of Theorem \ref{schwarznoint} are such that $h_k \in H^{p_k}(D)$, then the solution $w$ in Theorem \ref{schwarznoint} is an element of $H^{n,p}(D)$, where $p:= \min_k\{p_k\}$. Consequently, if the $h_k$ in the hypothesis of Theorem \ref{schwarzmeta} are chosen so that $h_k \in H^{p_k}(D)$ and $A \in W^{n-1,\infty}(D)$, then the solution $w$ in Theorem \ref{schwarzmeta} is an element of $H^{n,p}_A(D)$, where $p:= \min_{k}\{p_k\}$.  
\end{comm}

In the case of $A \in C^\infty(\overline{D})$, we solve a Schwarz boundary value problem where the solution is meta-analytic and the boundary condition and the pointwise condition at $z = 0$ can be in terms of the solution, not its poly-analytic factor. 

\begin{theorem}\label{schwarzmetanodivide}
For $n$ a positive integer and $A \in C^\infty(\overline{D})$, the Schwarz problem 
    \begin{numcases}{}
        \left(\frac{\p }{\p\z} - A\right)^n w = 0, \label{pde1}\\
        \re\left\{ \left( \left(\frac{\p }{\p\z}-A\right)^k w \right)_b \right\} \nonumber\\
        =  \re\left\{ e^{\psi(e^{i(\cdot)})}\left(ic_{n-1-k} - I_{n-1-k}+(h_{n-1-k})_b - \sum_{\ell = 1}^{n-k-1} \frac{(-1)^\ell}{\ell!} e^{-i\ell(\cdot)} (f_{n-\ell})_b\right)\right\}, \label{bc1}\\
        \im\left\{ \left(\left(\frac{\p }{\p\z}-A\right)^k w\right)(0) \right\} = e^{\psi(0)}c_{n-1-k} \label{impt1}
    \end{numcases}
where each $f_k$ solves the Schwarz problem
    \begin{numcases}{}
        \frac{\p f_k}{\p\z} = f_{k-1} \nonumber\\
        \re\left\{ (f_k)_b\right\} = \re\left\{ (h_{k-1})_b - \sum_{\ell = 1}^{k-1} \frac{(-1)^\ell}{\ell!} e^{-i\ell(\cdot)} (f_{k-\ell})_b \right\} \nonumber\\
        \im\left\{ f_k(0)\right\} =  c_{k-1} \nonumber
    \end{numcases}
with $h_{k-1} \in H_b$, and $c_{k-1} \in \R$,
for each $k$ with $1 \leq k \leq n$, is solved by 
\begin{align*}
w(z) = e^{\psi(z)}\left[ic_{n-1} - I_{n-1} + \frac{1}{2\pi} \langle (h_{n-1})_b, P_r(\theta - \cdot) \rangle - \sum_{\ell = 1}^{n-1} \frac{(-1)^\ell}{\ell!} \z^\ell f_{n-\ell}(z)\right], 
\end{align*}
where each $f_k$ is described by 
\[
f_k(z) = ic_{k-1} - I_{k-1} + \frac{1}{2\pi} \langle (h_{k-1})_b, P_r(\theta - \cdot) \rangle - \sum_{\ell = 1}^{k-1} \frac{(-1)^\ell}{\ell!} \z^\ell f_{k-\ell}(z),
\]
with
\[
I_{k-1} := \frac{i}{2\pi} \langle \im\{(h_{k-1})_b\}, 1\rangle,
\] 
for $1 \leq k \leq n$, and $f_0 \equiv 0$
and 
\[
\psi(z) = -\frac{1}{\pi} \iint_D \left( \frac{A(\zeta)}{\zeta}\frac{\zeta + z}{\zeta - z} + \frac{\overline{A(\zeta)}}{\overline{\zeta} }\frac{1+z\overline{\zeta}}{1-z\overline{\zeta}} \right)\,d\xi\,d\eta.
\]
\end{theorem}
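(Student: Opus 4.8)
The plan is to follow the proof of Theorem \ref{schwarzmeta} almost verbatim, the one new feature being that the hypothesis $A \in C^\infty(\overline{D})$, together with the choice of the Begehr representation of $\psi$ from Theorem \ref{toperatoralt}, makes $e^\psi$ smooth up to the boundary and real at the origin. These two facts are exactly what allow the boundary condition (\ref{bc1}) and the pointwise condition (\ref{impt1}) to be stated in terms of the iterates $\left(\frac{\p}{\p\z}-A\right)^k w$ of the solution itself, rather than in terms of the poly-analytic factor $w/e^\psi$ as in Theorem \ref{schwarzmeta}. Accordingly, I would first record the three properties of $\psi$ on which everything rests: (i) $\frac{\p\psi}{\p\z}=A$ and (ii) $\im\psi(0)=0$, both immediate from Theorem \ref{toperatoralt} applied to $f=A$; and (iii) $\psi \in C^\infty(\overline{D})$, which holds because $A \in C^\infty(\overline{D})$ and, by the remark following Theorem \ref{toperatoralt}, the Begehr representation differs from the Cauchy transform $-\frac{1}{\pi}\iint_D A(\zeta)/(\zeta-z)\,d\xi\,d\eta$ (smooth on $\overline{D}$ by the regularity of Vekua) by a function smooth up to the boundary.

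Next I would verify the PDE and identify the iterates. Writing $F$ for the bracketed factor, so that $w = e^\psi F$ and $F = f_n$ is the $n$-analytic function from Theorem \ref{schwarznoint}, the single-step identity $\left(\frac{\p}{\p\z}-A\right)(e^\psi G) = e^\psi \frac{\p G}{\p\z}$, valid precisely because of (i), gives by induction
\[
\left(\frac{\p}{\p\z}-A\right)^k w = e^\psi\,\frac{\p^k F}{\p\z^k} = e^\psi f_{n-k},
\]
using $\frac{\p^k f_n}{\p\z^k} = f_{n-k}$ from the construction. Taking $k=n$ and recalling $f_0\equiv 0$ yields (\ref{pde1}).

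It then remains to check the two remaining conditions. For (\ref{impt1}), I would evaluate the previous display at $z=0$: since (ii) makes $e^{\psi(0)}$ a positive real scalar, $\im\left\{e^{\psi(0)} f_{n-k}(0)\right\} = e^{\psi(0)}\im\{f_{n-k}(0)\} = e^{\psi(0)} c_{n-1-k}$, by the normalization of $f_{n-k}$ in Theorem \ref{schwarznoint}. For (\ref{bc1}), property (iii) lets me invoke the observation already exploited in the earlier $A\in C^\infty(\overline{D})$ theorem, namely that multiplying by the $C^\infty(\p D)$ function $e^\psi$ sends a distributional boundary value $u_b$ to $e^{\psi(e^{i\theta})}u_b$; hence the boundary value of $\left(\frac{\p}{\p\z}-A\right)^k w = e^\psi f_{n-k}$ is $e^{\psi(e^{i\theta})}(f_{n-k})_b$, and substituting the explicit form of $(f_{n-k})_b$ read off from Theorem \ref{schwarznoint} and taking real parts reproduces the right-hand side of (\ref{bc1}).

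I expect the only genuine obstacle to lie in this last step: justifying rigorously that the distributional boundary value of the product $e^\psi f_{n-k}$ factors as $e^{\psi(e^{i\theta})}(f_{n-k})_b$ (uniformly in the pairing against test functions, which is exactly where the smoothness input (iii) is the linchpin), and then matching it term by term with the stated right-hand side of (\ref{bc1}) after the reindexing $k \mapsto n-k$ in the family $\{f_k\}$. The PDE and pointwise parts are routine once (i) and (ii) are in hand, so the entire weight of the argument, and the reason $A\in C^\infty(\overline{D})$ rather than merely $A\in W^{n-1,\infty}(D)$ is assumed, rests on the regularity of $\psi$ up to the boundary.
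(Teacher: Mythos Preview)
Your proposal is correct and follows essentially the same route as the paper: establish $\left(\frac{\p}{\p\z}-A\right)^k w = e^\psi f_{n-k}$ by the intertwining identity, use $\im\psi(0)=0$ from Theorem \ref{toperatoralt} to make $e^{\psi(0)}$ real for (\ref{impt1}), and use $\psi\in C^\infty(\overline{D})$ to factor the distributional boundary value for (\ref{bc1}). The only cosmetic difference is that the paper cites Theorem \ref{bvdistl1} for the boundary-value factorization while you invoke the argument of the preceding $A\in C^\infty(\overline{D})$ theorem; both justifications work.
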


\begin{proof}
    The constructed $w$ solves (\ref{pde1}) by the same direct computation as in the proof of Theorem \ref{schwarzmeta}. Since $A\in C^\infty(\overline{D})$ implies $e^\psi \in C^\infty(\overline{D})$, it follows, by Theorem \ref{bvdistl1}, that $(e^\psi)_b = e^\psi|_{\p D}$. Since 
    \[
        \left(\frac{\p }{\p\z}-A\right)^k w  = e^{\psi(z)}\left[ic_{n-1-k} - I_{n-1-k} + \frac{1}{2\pi} \langle (h_{n-1-k})_b, P_r(\theta - \cdot) \rangle - \sum_{\ell = 1}^{n-1-k} \frac{(-1)^\ell}{\ell!} \z^\ell f_{n-\ell}(z)\right],
    \]
    it follows that 
    \begin{align*}
        &  \left(\left(\frac{\p }{\p\z}-A\right)^k w\right)_b\\
        &= \left( e^{\psi(z)}\left[ic_{n-1-k} - I_{n-1-k} + \frac{1}{2\pi} \langle (h_{n-1-k})_b, P_r(\theta - \cdot) \rangle - \sum_{\ell = 1}^{n-1-k} \frac{(-1)^\ell}{\ell!} \z^\ell f_{n-\ell}(z)\right]\right)_b\\
        &= e^{\psi(e^{i(\cdot)})}\left(ic_{n-1-k} - I_{n-1-k}+(h_{n-1-k})_b - \sum_{\ell = 1}^{n-k-1} \frac{(-1)^\ell}{\ell!} e^{-i\ell(\cdot)} (f_{n-\ell})_b\right)
    \end{align*}
    and 
    \begin{align*}
        & \re\left\{ \left( \left(\frac{\p }{\p\z}-A\right)^k w \right)_b \right\} \\
        &=  \re\left\{ e^{\psi(e^{i(\cdot)})}\left(ic_{n-1-k} - I_{n-1-k}+(h_{n-1-k})_b - \sum_{\ell = 1}^{n-k-1} \frac{(-1)^\ell}{\ell!} e^{-i\ell(\cdot)} (f_{n-\ell})_b\right)\right\}.
    \end{align*}
    Hence, (\ref{bc1}) is satisfied. Now, since $e^{\psi(0)}$ is real, it follows that
    \begin{align*}
        &\left(\left(\frac{\p }{\p\z}-A\right)^k w\right) (0) \\
        &= e^{\psi(0)}\left[ic_{n-1-k} - I_{n-1-k} + \frac{1}{2\pi} \langle (h_{n-1-k})_b, 1 \rangle - \sum_{\ell = 1}^{n-1-k} \frac{(-1)^\ell}{\ell!} 0^\ell f_{n-\ell}(0)\right] \\
        &= e^{\psi(0)}\left[ic_{n-1-k} + \re\left\{\frac{1}{2\pi} \langle (h_{n-1-k})_b, 1 \rangle\right\}\right]\\
        &= e^{\psi(0)}\re\left\{\frac{1}{2\pi} \langle (h_{n-1-k})_b, 1 \rangle\right\}+ ie^{\psi(0)}c_{n-1-k}\\
    \end{align*}
    and 
    \begin{align*}
        \im\left\{\left(\left(\frac{\p }{\p\z}-A\right)^k w\right) (0)\right\} 
        &= e^{\psi(0)}c_{n-1-k}.
    \end{align*}
    Therefore, (\ref{impt1}) is satisfied.
\end{proof}

\printbibliography
\end{document}